\documentclass[12pt]{amsart}
%%%%%%%%%packages%%%%%%%%%
\usepackage{amsmath,amscd, nicefrac, xcolor, setspace, amsfonts, amssymb, bbm, amsthm, tikz, tikz-cd, enumerate, graphicx,verbatim,todonotes,mathtools,url}
\usetikzlibrary{calc,decorations.markings}
\usepackage[margin=1in]{geometry}
\usepackage[shortalphabetic]{amsrefs}

%%%%%%%%%theorem environment and numbering%%%%%%%%%
%%%%%%%%%uses amsthm package
%\theoremstyle{definition}
\newtheorem{thm}{Theorem}[section]
\newtheorem{theorem}[thm]{Theorem}

\newtheorem{proposition}[thm]{Proposition}
\newtheorem{lemma}[thm]{Lemma}
\newtheorem*{lemma*}{Lemma}

\newtheorem{definition}[thm]{Definition}

\newtheorem{remark}[thm]{Remark}

%%%%%%%%%notation shortcuts%%%%%%%%%

\newcommand{\Q}{\mathbb{Q}}
\newcommand{\Z}{\mathbb{Z}}

\newcommand{\FF}{\mathbb{F}_2}

\newcommand{\Gal}{\mathrm{Gal}}

\newcommand{\Sel}{\mathrm{Sel}_2}

\newcommand{\dimf}{\dim_{\mathbb{F}_2}}

\newcommand{\rk}{\mathrm{rk}}
\newcommand{\nrm}{\mathrm{\mathbf{N}}}

\usepackage[T2A,T1]{fontenc}
\DeclareSymbolFont{cyrillic}{T2A}{cmr}{m}{n}
\DeclareMathSymbol{\Sha}{\mathalpha}{cyrillic}{216}

\numberwithin{equation}{section}

\onehalfspacing

\allowdisplaybreaks

\usepackage[pagewise]{lineno}
%\linenumbers

\title[Rank growth of elliptic Curves in $S_4$ and $A_4$-quartic extensions]{Rank growth of Elliptic Curves in $S_4$ and $A_4$-quartic extensions of the rationals} 
\date{\today}
\author{Daniel Keliher}
\address{Massachusetts Institute of Technology, Cambridge, MA 02139, USA}
\email{keliher@mit.edu}
\begin{document}

\begin{abstract}
We investigate the rank growth of elliptic curves from $\mathbb{Q}$ to $S_4$ and $A_4$-quartic extensions $K/\mathbb{Q}$. In particular,  we are interested in the quantity $\mathrm{rk}(E/K) - \mathrm{rk}(E/\mathbb{Q})$ for fixed $E$ and varying $K$. When $\mathrm{rk}(E/\mathbb{Q}) \leq 1$, with $E$ subject to some other conditions, we prove there are infinitely many $S_4$-quartic extensions $K/\mathbb{Q}$ over which $E$ does not gain rank, i.e. such that $\mathrm{rk}(E/K) - \mathrm{rk}(E/\mathbb{Q}) = 0$. To do so, we show how to control the 2-Selmer rank of $E$ in certain quadratic extensions, which in turn contributes to controlling the rank in families of $S_4$ and $A_4$-quartic extensions of $\mathbb{Q}$.
\end{abstract}
\maketitle
\section{Introduction}

\subsection{Rank Growth}
For a number field $L$ and an elliptic curve $E$ defined over $L$, let $E(L)$ be the group of $L$-rational points of $E$. The Mordell-Weil Theorem says $E(L)$ is a finitely generated abelian group. The rank of $E(L)$, denoted $\mathrm{rk}(E/L)$, has been the subject of much study. Of particular interest here is the behavior of the rank upon base change, i.e. for an extension of number fields $K/L$, what is $\mathrm{rk}(E/K) - \mathrm{rk}(E/L)$? We call this difference the \textit{rank growth} of $E$ in $K/L$.

Suppose $L=\mathbb{Q}$ and $K/\mathbb{Q}$ denotes a quadratic extension. Given an elliptic curve $E/\mathbb{Q}$, a conjecture of Goldfeld predicts that 50\% of quadratic twists, $E^K$, of $E$ have analytic rank zero and 50\% have analytic rank one. See Section \ref{subsec:quadtwists} for a definition and discussion of quadratic twists. Recent work of Smith \cites{Smith0, Smith1, Smith2} studies the distribution of $\ell^\infty$-Selmer groups and proves a version of Goldfeld's Conjecture for $2^\infty$-Selmer coranks. 

In this paper, we will be interested in studying rank growth in higher degree and non-abelian extensions. In this setting, ranks of quadratic twists, $E^K$, measure the rank growth of $E$ from $\mathbb{Q}$ to $K$, which will be essential for rank growth in some larger degree extensions.  Previously, and in higher degrees, David, Fearnley, and Kisilevsky \cite{David} have given conjectures for how frequently the rank of an elliptic curve grows in cyclic prime degree extensions. Lemke Oliver and Thorne \cite{LOT} gave asymptotic lower bounds for the number of $S_d$ extensions for which an elliptic curve $E$ gains rank.  Further, Shnidman and Weiss \cite{Ari} study rank growth of elliptic curves from a number field $L$ up to an extension $L(\sqrt[2n]{d})$.  

\subsection{Results for $S_4$ and $A_4$-quartic extensions}
In this paper, we investigate the rank growth of elliptic curves in $S_4$ and $A_4$-quartic extensions of the rationals. In what follows, unless stated otherwise, we will always assume $E$ is an elliptic curve defined over $\mathbb{Q}$. Further,  $K/\mathbb{Q}$ will always be an $S_4$ or $A_4$-quartic extension. That is, one for which the normal closure of $K$ over $\mathbb{Q}$ is an $S_4$ or $A_4$-Galois extension. For an elliptic curve $E/\mathbb{Q}$ with discriminant $\Delta_E$, we'll consider $\mathrm{rk}(E/K) - \mathrm{rk}(E/\mathbb{Q})$ for many such $K$. It will often be convenient to use the rank of a Selmer group of an elliptic curve $E/K$ (here we only need the 2-Selmer group) in place of the rank, $\mathrm{rk}(E/K)$. See Definition \ref{def:Sel2} for the definition of the 2-Selmer group, $\mathrm{Sel}_2(E/K)$, of an elliptic curve $E/K$ and Section \ref{seldefs} for a discussion of their utility in our context. 

In particular, for an elliptic curve $E$ with Selmer rank zero over the rationals, subject to some mild constraints, we prove there are infinitely many $S_4$ (or $A_4$) quartic extensions over which $E$ does not gain rank. Further, we give a lower bound on the number of such extensions with bounded discriminant with some fixed cubic resolvent field.

\begin{theorem}\label{thm:mainrank}
Let $E$ be an elliptic curve defined over $\Q$ such that $\Gal(\mathbb{Q}(E[2])/\Q) \simeq S_3$,
 and $\Sel(E/\Q)=0$.
Let $K_3$ be an $S_3$-cubic (resp., $C_3$-cubic) extension of $\Q$ such that 
$\tilde K_3$ and $\mathbb{Q}(E[2])$ are linearly disjoint.
%$K_3 \not \subset \mathbb{Q}(E[2])$ and $\mathrm{Cl}(K_3)[2] = 0$. 
Suppose also that there is a place $v_0$ of $K_3$, %new
unramified in $\tilde K_3$, such that either $v_0$ is real and 
%$(\Delta_E)_{v_0}<0$, 
$\Delta_E <0$, or $v_0 \nmid 2\infty$, $E$ has multiplicative reduction at $v_0$ and $\mathrm{ord}_{v_0}(\Delta_E)$ is odd. Then there are infinitely many $S_4$-quartic (resp., $A_4$-quartic) extensions $K$ over $\Q$ with cubic resolvent $K_3$ such that
\begin{itemize}
    \item if $\dim_{\FF}\Sel(E/K_3) \equiv 0 \pmod{2}$, then $\rk(E/K)=0$;
    \item if $\dim_{\FF}\Sel(E/K_3) \equiv 1 \pmod{2}$, and also assuming the parity condition \\ $\dim_{\FF}\Sel(E/K_3) \equiv \rk(E/K_3) \pmod{2}$, then $\rk(E/K)=1$.
\end{itemize}
\end{theorem}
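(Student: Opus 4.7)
The plan is to reduce the rank-growth question to controlling the $2$-Selmer rank of a single quadratic twist $E^\alpha/K_3$, and then invoke the Selmer-control result for quadratic twists developed earlier in the paper. The first step is to recall the parametrization of $S_4$-quartic (resp.\ $A_4$-quartic) extensions $K/\Q$ with fixed cubic resolvent $K_3$: every such $K$ arises from a class $\alpha \in K_3^\times/(K_3^\times)^2$ whose norm $\nrm_{K_3/\Q}(\alpha)$ is a rational square. Indeed, letting $M := K_3(\sqrt{\alpha})$, the normal closure $\widetilde{M}/\Q$ has Galois group $V_4 \rtimes \Gal(\widetilde{K_3}/\Q)$, which is $S_4$ (resp.\ $A_4$), and $K$ is the quartic subfield of $\widetilde{M}$ fixed by a point stabilizer. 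So producing infinitely many admissible quartics reduces to producing infinitely many admissible classes~$\alpha$.

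Next I would decompose $E(\widetilde{M}) \otimes \Q$ into isotypic components for $G := \Gal(\widetilde{M}/\Q)$, so that for each intermediate field $L$ Frobenius reciprocity gives $\rk(E/L) = \sum_\rho m_\rho \dim \rho^{H_L}$, where $H_L$ is the stabilizer of $L$ and $m_\rho$ is the multiplicity of the irreducible representation $\rho$. A short character computation in the representation ring of $G$ identifies
\[
\mathrm{Ind}_{H_K}^G \mathbbm{1} \,-\, \mathbbm{1} \;=\; \mathrm{Ind}_{H_M}^G \mathbbm{1} \,-\, \mathrm{Ind}_{H_{K_3}}^G \mathbbm{1}
\]
(both sides equal the $3$-dimensional standard representation of $G$). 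Combined with the quadratic-twist splitting $E(M) \otimes \Q = (E(K_3) \otimes \Q) \oplus (E^\alpha(K_3) \otimes \Q)$, this yields
\[
\rk(E/K) \,-\, \rk(E/\Q) \;=\; \rk(E^\alpha/K_3),
\]
where $E^\alpha$ is the quadratic twist of $E/K_3$ by the character cutting out $M/K_3$. Since the hypothesis $\Sel(E/\Q) = 0$ forces $\rk(E/\Q) = 0$, it suffices to exhibit infinitely many admissible $\alpha$ with $\rk(E^\alpha/K_3) = 0$ (resp.\ $1$).

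The decisive input is the earlier $2$-Selmer-rank control result for quadratic twists over $K_3$. Under the running hypotheses — $K_3 \not\subset \Q(E[2])$, $\mathrm{Cl}(K_3)[2] = 0$, and the local condition at $v_0$ — that result should furnish infinitely many $\alpha$ for which $\dimf \Sel(E^\alpha/K_3)$ attains the minimum value permitted by the Selmer parity: namely $0$ when $\dimf \Sel(E/K_3)$ is even, and $1$ when it is odd. The trivial bound $\rk(E^\alpha/K_3) \leq \dimf \Sel(E^\alpha/K_3)$ then gives $\rk(E^\alpha/K_3) = 0$ in the even case; in the odd case it gives $\rk(E^\alpha/K_3) \in \{0,1\}$, and the assumed parity conjecture forces $\rk(E^\alpha/K_3) = 1$.

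The main technical obstacle will be imposing the square-norm condition $\nrm_{K_3/\Q}(\alpha) \in (\Q^\times)^2$ \emph{simultaneously} with the Selmer-minimizing condition, since without it the normal closure of $K_3(\sqrt{\alpha})$ has Galois group strictly larger than $S_4$ or $A_4$ and $K_3$ fails to be the cubic resolvent of the associated quartic. Provided the earlier Selmer-control is proved by a Mazur--Rubin-style choice of local conditions at auxiliary primes that split appropriately in $\Q(E[2])/\Q$, the additional global square-norm requirement cuts out a positive-density subfamily of admissible $\alpha$; varying the local data then produces infinitely many pairwise non-isomorphic quartic fields $K$ (and a lower bound on discriminants), after which the rank identity delivers the stated conclusion.
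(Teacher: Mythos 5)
Your proposal follows essentially the same route as the paper: the representation-theoretic identity $\rk(E/K)-\rk(E/\Q)=\rk(E/F)-\rk(E/K_3)$ (the paper's Lemma \ref{lem:rankgrowth}), the square-norm parametrization of quartics with resolvent $K_3$ (Lemma \ref{lem:quadforS4}), and the Selmer-minimization for square norm twists (Theorem \ref{thm:mainsel} via Lemmas \ref{lem:selrank-2} and Proposition \ref{prop:manytwists}), finished off by $\rk \leq \dimf\Sel$ and the parity assumption. The "technical obstacle" you flag at the end is exactly what the paper's square-norm-twist machinery in Sections 4--5 resolves, so the argument is correct and matches the paper's.
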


To prove the theorem above, we utilize two tools. First, we reduce the problem of studying rank growth in our quartic extensions, $K/\mathbb{Q}$, to one of studying rank growth in certain quadratic subextensions of the Galois closure of $K$, and thus to studying the rank of certain quadratic twists. Second, we use and further develop some Selmer group machinery of Mazur and Rubin. Indeed, in \cite{MR}, they show that under suitable assumptions, an elliptic curve has infinitely many twists of a prescribed Selmer rank. In subsequent work, Klasgbrun, Mazur, and Rubin \cite{KMR} study the distribution of 2-Selmer ranks of quadratic twists of an elliptic curve. The quadratic twists we study here, see Definition \ref{def:sqtwist} below, are thin in the full family of quadratic twists, so we require a different approach. Nonetheless, we use similar ideas to show one can reduce the 2-Selmer ranks of the appropriate quadratic twists either to one or zero. Translating from the language of Selmer groups to ranks yields Theorem \ref{thm:mainrank}.

To emphasize the extra properties imposed on the quadratic twists we consider here, we make the following definition. 

\begin{definition}[Square Norm Twists]\label{def:sqtwist}
Let $E$ be an elliptic curve defined over a number field $L$, and let $F/L$ be a quadratic extension. We call a quadratic twist $E^F$ over $L$ a \emph{square norm twist} if $F=L(\sqrt{\alpha})$ where $\alpha \in L^\times/(L^\times)^2$ and $\nrm_{L/\mathbb{Q}}(\alpha)$ is a square. 
\end{definition}

Such twists will be key to keeping track of an associated $S_4$-quartic extension when working over a suitable cubic field (see Lemma \ref{lem:quadforS4}).

Likewise, to streamline the discussion and highlight the properties required of the cubic resolvents of our $S_4$ (or $A_4$) quartic extensions, which are $S_3$ (or $C_3$) cubic extensions of the rationals, always denoted $K_3$, we make the next definition.
\begin{definition}[Admissible Cubic Resolvent]\label{def:admissibleK3}
Let $K_3$ be a cubic extension of the rationals and fix an elliptic curve 
%$E/K_3$ 
$E/\mathbb{Q}$ with discriminant $\Delta_E$. Suppose $E(K_3)[2]=0$, %$\mathrm{Cl}(K_3)[2]=0$, 
$\tilde K_3$ (the Galois closure of $K_3$ over $\mathbb{Q}$) and $\mathbb{Q}(E[2])$ are linearly disjoint, and there is a place $v_0$ of $K_3$, %NEW

unramified in $\tilde K_3$, such that either $v_0$ is real and $(\Delta_E)_{v_0} < 0$ or $v_0 \nmid 2\infty$, $E$ has multiplicative reduction at $v_0$ and $\mathrm{ord}_{v_0}(\Delta_E)$ is odd. For such an extension,

\begin{itemize}
    \item if $K_3$ is an $S_3$-cubic extensions, %if $\mathrm{Gal}(\tilde K_3/\mathbb{Q})=S_3$, 
    we call $K_3$ an \emph{admissible $S_3$-cubic resolvent for $E$};
    \item if $K_3$ is a $C_3$-cubic extension, %if $\mathrm{Gal}(\tilde K_3/\mathbb{Q})=C_3$, 
    we call $K_3$ an \emph{admissible $C_3$-cubic resolvent for $E$}.
\end{itemize}
\end{definition}

In the event we do not need to specify one of the two Galois group cases above, we will call a $K_3$ as in one of the two cases above an \emph{admissible cubic resolvent} for some $E$.

Note that the restrictions placed on $K_3$ to make it admissible for some elliptic curve are not overly burdensome.
%For example, the condition that $\mathrm{Cl}(K_3)[2]=0$ is satisfied for a positive proportion of cubic fields. This follows from from a result of Bhargava \cite{BhargQuartic}*{Theorem 5}. 
The conditions on the distinguished place should be compared to the assumptions of \cite{MR}*{Theorem 1.6}.

Theorem \ref{thm:mainrank} is a consequence of the following result.

\begin{theorem}\label{thm:mainsel}
Let $E$ be an elliptic curve defined over $\Q$ such that $\Gal(\mathbb{Q}(E[2])/\Q) \simeq S_3$,
 and $\Sel(E/\Q)=0$, and 
let $K_3$ be an admissible cubic resolvent for $E$.
Then there are infinitely many square norm twists, $E^F/K_3$ such that
\begin{itemize}
    \item If $\dim_{\FF}\Sel(E/K_3) \equiv 0 \pmod{2}$, then $\dim_{\FF} \Sel(E^F/K_3) = 0$ ;
    \item if $\dim_{\FF}\Sel(E/K_3) \equiv 1 \pmod{2}$, then $\dim_{\FF} \Sel(E^F/K_3) = 1$.
\end{itemize}
\end{theorem}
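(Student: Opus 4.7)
The plan is to adapt the Mazur--Rubin framework for controlling 2-Selmer ranks of quadratic twists \cite{MR} to the thin family of square norm twists. Working with the self-dual Selmer structure $\mathcal{F}$ on $E[2]$ over $K_3$, so that $\Sel(E/K_3) = H^1_{\mathcal{F}}(K_3, E[2])$, the goal is, given a starting dimension $d = \dimf\Sel(E/K_3)$, to exhibit an $\alpha \in K_3^\times/(K_3^\times)^2$ with $\nrm_{K_3/\Q}(\alpha) \in (\Q^\times)^2$ whose twisted Selmer structure satisfies $\dimf H^1_{\mathcal{F}^\alpha}(K_3, E[2]) = d - 2$, and then iterate until the dimension reaches its minimum of $0$ or $1$.

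First I would verify that the Mazur--Rubin input is available over $K_3$. Since $\Gal(\Q(E[2])/\Q) \simeq S_3$ and $K_3 \not\subset \Q(E[2])$, the extension $K_3(E[2])/K_3$ is again $S_3$, so $E[2]$ is an irreducible $G_{K_3}$-module with $H^0(K_3, E[2]) = 0$. The hypothesis $\mathrm{Cl}(K_3)[2] = 0$ lets me prescribe the valuations of $\alpha$ at any finite set of primes and realize $\alpha$ globally in $K_3^\times/(K_3^\times)^2$ up to units. The distinguished place $v_0$ from admissibility plays the same role as in \cite{MR}*{Theorem 1.6}: it anchors the local Selmer condition there and pins down the parity of the 2-Selmer rank throughout the iteration, accounting for the dichotomy in the conclusion.

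The heart of the argument is the twist construction. To respect the square-norm constraint in both the $S_3$ and $C_3$ cubic-resolvent cases uniformly, I would use pairs of primes $\mathfrak{q}_1, \mathfrak{q}_2$ of $K_3$ lying above a single rational prime $p$ that splits completely in $K_3/\Q$; taking $\alpha$ with $\mathrm{ord}_{\mathfrak{q}_i}(\alpha)$ odd and all other finite valuations even, the ideal-theoretic norm of $\alpha$ is $p^2$, a square. After correcting by a unit (possible thanks to $\mathrm{Cl}(K_3)[2] = 0$ and sufficient control of the units modulo squares, using signatures at the archimedean places and the local structure at $v_0$), one gets $\nrm_{K_3/\Q}(\alpha) \in (\Q^\times)^2$, yielding a genuine square norm twist $E^F/K_3$ with $F = K_3(\sqrt{\alpha})$. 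Applying Chebotarev density to a sufficiently large extension of $K_3(E[2])$ classifying the relevant Selmer cocycles, one can further impose prescribed Frobenius behavior at each $\mathfrak{q}_i$; then standard Poitou--Tate comparison of Selmer structures implies that $\dimf H^1_{\mathcal{F}^\alpha}(K_3, E[2])$ differs from $d$ by $-2$, $0$, or $+2$.

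The main obstacle is forcing the net change to be exactly $-2$, which amounts to arranging that both primes of the pair are of the ``rank-lowering'' type and that their Selmer contributions are linearly independent in the Cassels--Tate pairing on $\Sel(E/K_3)$. The classical Mazur--Rubin analysis changes rank by $\pm 1$ per single prime; in our setting the square-norm restriction forces the primes to come in pairs, so the rank changes by an even amount and parity is preserved, which is precisely the dichotomy asserted by the theorem. Once this one-step reduction is established, iteration---powered by the fact that infinitely many completely split rational primes, and hence infinitely many eligible pairs, remain available at every stage---produces an infinite sequence of distinct square classes $\alpha$, and thus an infinite family of square norm twists $E^F/K_3$ with $\dimf \Sel(E^F/K_3)$ equal to $0$ or $1$ according to the parity of $d$.
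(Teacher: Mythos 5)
Your overall strategy is the same as the paper's: lower $\dimf\Sel(E/K_3)$ in steps of two by twisting at a pair of primes of $K_3$ lying over a rational prime that splits completely in $K_3$ (so the twisting element has square norm), invoke the Mazur--Rubin comparison of Selmer groups of quadratic twists, iterate down to $0$ or $1$, and use the abundance of admissible primes to get infinitely many twists. The gap is precisely at the step you yourself flag as ``the main obstacle,'' and you do not resolve it. You assert that Chebotarev in a large extension lets you ``impose prescribed Frobenius behavior at each $\mathfrak{q}_i$,'' but the two primes lie over the \emph{same} rational prime $p$, so their Frobenius data cannot be prescribed independently: a single conjugacy class in the Galois group over $\Q$ determines the behavior at all three primes of $K_3$ above $p$ at once. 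Worse, the hypothesis $\Sel(E/\Q)=0$ forces, via corestriction, the relation $\mathrm{loc}_{\mathfrak p_1}(c)+\mathrm{loc}_{\mathfrak p_2}(c)+\mathrm{loc}_{\mathfrak p_3}(c)=0$ for every $c\in\Sel(E/K_3)$, so the three localization maps are linearly dependent and it is not at all automatic that some pair $T=\{\mathfrak p_i,\mathfrak p_j\}$ satisfies $\dimf\mathrm{loc}_T(\Sel(E/K_3))=2$. Establishing that a suitable Frobenius class exists despite this constraint is the content of the paper's Lemma \ref{lem:locsurj}: one must analyze the possible Galois groups $G\le (S_4\times_{S_3}S_4)\wr S_3$ of the normal closure over $\Q$ of the compositum of the two $S_4$-fields cut out by two independent Selmer classes, and verify in each candidate subgroup that an element with the required localization pattern exists (the paper does this by exhaustive search). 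Without this argument the one-step reduction by $2$ is unproved.

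Two smaller points. The independence you need is of the images of $c_1,c_2$ under $\mathrm{loc}_T$ in $\bigoplus_{\mathfrak p\in T}H^1_f((K_3)_{\mathfrak p},E[2])$, not independence with respect to the Cassels--Tate pairing. And for ``infinitely many,'' the paper does not rely on re-running the rank-lowering step: it produces a single square norm twist of the target Selmer rank and then applies a separate counting argument (Proposition \ref{prop:manytwists}, built on the rank-preserving twists of Lemma \ref{lem:sameranktwist} together with an ideal count and a Tauberian theorem) to get the quantitative bound $\gg X^{1/2}/\log(X)^{\alpha}$ that the paper's notion of ``infinitely many'' requires; your concluding sentence gives no such count.
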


\begin{remark}
In Theorems \ref{thm:mainrank} and \ref{thm:mainsel}, and in what follows, when we say ``infinitely many'' we mean the number of such things with the norm of their discriminant bounded above by $X$ is $\gg X^{1/2}/\log{X}^\alpha$ for some $\alpha > 0$. Further, $\alpha$ depends on $E$ and the two torsion field $K_3(E[2])$. The sizes of these lower bounds, particularly the values of $\alpha$ depending on $K_3$ and $E$, are elucidated in Proposition \ref{prop:manytwists}.
\end{remark}

\begin{remark}
In the case that one obtains infinitely many $A_4$-quartic extensions for which the Mordell or 2-Selmer rank has some prescribed behavior, the ``infinitely many'' given by the two theorems above differs from the predicted number of $A_4$-quartic extensions only by some factors of $\log X$. In particular, Malle's conjecture \cite{Malle} predicts that the number of $A_4$-quartic extensions of a number field $L$ with absolute value of the norm of the relative discriminant bounded by $X$ is asymptotic to $c_LX^{1/2}\log{X}^{b_L}$ for constants $b_L$ and $c_L$ depending on $L$.  

The number of $S_4$-quartic extensions of $L$ with absolute value of the norm of the relative discriminant bounded by $X$ is asymptotic to $d_LX$ for a constant $d_L$ depending on $L$ \cite{ BSW}. In this case the ``infinitely many'' given by the two theorems above differs from $S_4$-quartic asymptotic by both a logarithmic factor and a factor of $X^{1/2}$.
\end{remark}

\subsection{Layout}\label{sec:layout}In Section 2 we outline the connection between rank growth in $S_4$ and $A_4$-quartic extensions with rank growth in certain quadratic extensions. In Section 3, we recall some facts about Selmer groups and record some related results of Mazur and Rubin \cite{MR} on quadratic twists. In Sections 4 and 5, we interface the tools of Section 3 with the notion of square norm twists to show we can decrease the 2-Selmer rank of an elliptic curve with a suitable square norm twist and can indeed find many such twists. Finally, in Section 6, we prove the main theorems stated above.

\section{Rank Growth in $S_4$-quartics}\label{sec:rankgrowth} 

\subsection{Preliminaries}

Note, for an extension of number fields $L/\mathbb{Q}$, we write $\tilde L$ for the Galois closure of $L$ in some choice of algebraic closure $\bar{\mathbb{Q}}$.

Consider an $S_4$ (or $A_4$) extension $K/\Q$ with Galois closure $\tilde K$. We are principally concerned with the change (or lack of change) in rank in the group of $K$-rational points vs. the group of $\mathbb{Q}$-rational points of $E$. We will show this rank change is governed by the rank growth in a quadratic extension of fields between $\Q$ and $\tilde K$.  $K_3$ will always denote a cubic resolvent for our quartic extension(s). Of particular interest will be fixing an admissible cubic resolvent $K_3$ and considering many quartic $S_4$ (or $A_4$) extensions $K$ with cubic resolvent $K_3$.

\subsection{$S_4$ and $A_4$-quartic extensions}
Before we turn to the question of rank growth, we record a few facts about $S_4$ and $A_4$-quartic extensions with cubic resolvent field $K_3$.

\begin{lemma}\label{lem:genericwr}
Let  $K_3/\mathbb{Q}$ be a cubic extension and $F$ be a quadratic extension of $K_3$. There is always an embedding $\mathrm{Gal}(\tilde F/\mathbb{Q})\hookrightarrow S_2 \wr S_3.$
\end{lemma}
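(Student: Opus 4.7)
The plan is to realize $\Gal(\tilde F/\mathbb{Q})$ as a group of permutations of a natural $6$-element set carrying a partition into three pairs, whose block-preserving symmetries are exactly $S_2 \wr S_3$.

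First I would write $F = K_3(\sqrt{\alpha})$ for some $\alpha \in K_3^\times \setminus (K_3^\times)^2$, label the three embeddings $\sigma_1, \sigma_2, \sigma_3 \colon K_3 \hookrightarrow \bar{\mathbb{Q}}$, and fix square roots $\beta_i \in \bar{\mathbb{Q}}$ of the conjugates $\sigma_i(\alpha)$ (with $\beta_1 = \sqrt{\alpha}$). Set $\Omega = \{\pm\beta_1, \pm\beta_2, \pm\beta_3\}$, partitioned into the pairs $B_i = \{\pm\beta_i\}$. Since $\tilde F$ is the Galois closure of $F = \mathbb{Q}(\beta_1)$, it is the splitting field over $\mathbb{Q}$ of the minimal polynomial of $\beta_1$, whose roots all lie in $\Omega$; hence $\tilde F = \mathbb{Q}(\Omega)$.

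Next I would produce the embedding. Any $\tau \in \Gal(\tilde F/\mathbb{Q})$ permutes the $\sigma_i$ via some $\pi_\tau \in S_3$, meaning $\tau \circ \sigma_i = \sigma_{\pi_\tau(i)}$ after choosing a compatible lift to the absolute Galois group. The identity $\tau(\beta_i)^2 = \sigma_{\pi_\tau(i)}(\alpha) = \beta_{\pi_\tau(i)}^2$ forces $\tau(\beta_i) \in B_{\pi_\tau(i)}$, so $\tau$ preserves the partition $\{B_1, B_2, B_3\}$. This produces a homomorphism $\Gal(\tilde F/\mathbb{Q}) \to \mathrm{Sym}(\Omega)$ landing in the block-preserving subgroup, and it is injective because $\tilde F = \mathbb{Q}(\Omega)$.

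The only substantive step---and the closest thing to a main obstacle---is identifying the block-preserving subgroup of $\mathrm{Sym}(\Omega) \cong S_6$ with $S_2 \wr S_3 = S_2^3 \rtimes S_3$, under which $S_2^3$ flips signs independently within each pair and $S_3$ permutes the three pairs; this is the standard imprimitive action of the wreath product on six letters. Degenerate situations (for instance $\alpha \in \mathbb{Q}^\times$, which collapses the $\sigma_i(\alpha)$ and shrinks $|\Omega|$) only shrink the image and so do not disturb the conclusion.
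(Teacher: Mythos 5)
The paper itself offers no proof of this lemma (it is treated as standard), so your argument has to stand on its own; the core of it --- letting $\Gal(\tilde F/\mathbb{Q})$ act on the six square roots $\pm\beta_i$ of the conjugates of $\alpha$ and observing that the action preserves the three blocks $B_i$, whose block-preserving group is the imprimitive $S_2\wr S_3$ --- is the right and standard approach, and it is correct whenever $\alpha$ is a primitive element of $K_3$. In that case all six elements of $\Omega$ are distinct, $\mathbb{Q}(\beta_1)\supseteq\mathbb{Q}(\beta_1^2)=\mathbb{Q}(\alpha)=K_3$ so $\mathbb{Q}(\beta_1)=F$, and $\tilde F=\mathbb{Q}(\Omega)$ gives injectivity as you say.

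The one place your write-up does not hold together is the final sentence about degenerate cases. If $\alpha\in\mathbb{Q}^\times$ then $\Omega$ collapses to $\{\pm\beta_1\}$, so $\mathrm{Sym}(\Omega)=S_2$ and your homomorphism $\Gal(\tilde F/\mathbb{Q})\to\mathrm{Sym}(\Omega)$ cannot possibly be injective (the group has order at least $6$); saying this ``only shrinks the image'' conflates the image with the source, and the map simply fails to be an embedding. The conclusion of the lemma is still true there, but your construction does not prove it. Two clean fixes: (i) note that every nontrivial square class of $K_3^\times/(K_3^\times)^2$ contains a primitive element --- if $\alpha\in\mathbb{Q}$, replace it by $\alpha u^2$ for any primitive $u\in K_3$, and since a cubic field has no quadratic subfield, $u^2$ and hence $\alpha u^2$ is again primitive --- so you may assume from the outset that $\alpha$ generates $K_3$; or (ii) avoid conjugates of $\beta_1$ altogether and let $\Gal(\bar{\mathbb{Q}}/\mathbb{Q})$ act by postcomposition on the six embeddings $F\hookrightarrow\bar{\mathbb{Q}}$, partitioned by their restrictions to $K_3$; the kernel of that action is exactly $\Gal(\bar{\mathbb{Q}}/\tilde F)$, so the induced map on $\Gal(\tilde F/\mathbb{Q})$ is injective with no case analysis. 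Either repair is one line, but as written the degenerate case is a genuine gap rather than a harmless remark.
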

Quadratic extensions of $S_3$-cubics generically have Galois group $S_2\wr S_3$ over $\mathbb{Q}$. We are interested in the case where the Galois group is instead $S_4 < S_2 \wr S_3$. Likewise for $C_3$-cubics, we wish to consider the case where quadratic extensions of $K_3$ have Galois group $A_4$.

\begin{lemma}\label{lem:quadforS4}
Fix an $S_3$ or $C_3$-cubic extension $K_3/\mathbb{Q}$, and let $F$ denote a quadratic extension of the form $K_3(\sqrt{\alpha})$, where $\alpha \in K_3$ and $\nrm_{K_3/\mathbb{Q}}(\alpha)$ is a square.

\begin{itemize}
    \item If $K_3/\mathbb{Q}$ is an $S_3$-cubic, then $\Gal(\tilde F/\mathbb{Q}) \simeq S_4$. Further, there is a one-to-one correspondence between such quadratic extensions $F/K_3$ and $S_4$-quartic extensions of $\mathbb{Q}$ with cubic resolvent $K_3$.
    \item If $K_3/\mathbb{Q}$ is a $C_3$-cubic, then $\Gal(\tilde F/\mathbb{Q}) \simeq A_4$. Further, there is a three-to-one correspondence between such quadratic extensions $F/K_3$ and $A_4$-quartic extensions of $\mathbb{Q}$ with cubic resolvent $K_3$.
\end{itemize}
\end{lemma}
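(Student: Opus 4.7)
The plan is to identify the Galois group of $\tilde F/\Q$ using Lemma \ref{lem:genericwr} together with the square-norm condition, and then to set up the reverse map by enumerating quadratic extensions of $K_3$ that sit inside a given $S_4$ or $A_4$ Galois closure $\tilde K$.

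I would begin with the forward direction. Let $\alpha_1,\alpha_2,\alpha_3$ be the $\Gal(\tilde K_3/\Q)$-conjugates of $\alpha$, so $\tilde F = \tilde K_3(\sqrt{\alpha_1},\sqrt{\alpha_2},\sqrt{\alpha_3})$, and Lemma \ref{lem:genericwr} embeds $\Gal(\tilde F/\Q)$ into $S_2 \wr S_3 = (\Z/2)^3 \rtimes S_3$. The assumption $\nrm(\alpha) \in (\Q^\times)^2$ makes $\sqrt{\alpha_1\alpha_2\alpha_3}$ lie in $\Q$, forcing the $(\Z/2)^3$-image to lie in $V := \{(s_1,s_2,s_3) : s_1+s_2+s_3 \equiv 0 \pmod{2}\} \simeq V_4$. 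Combined with the surjection of $\Gal(\tilde F/\Q)$ onto $\Gal(\tilde K_3/\Q)$, this lands $\Gal(\tilde F/\Q)$ inside $V \rtimes S_3 \simeq S_4$ (resp.\ $V \rtimes C_3 \simeq A_4$). To upgrade to equality, $\sqrt{\alpha_1}$ and $\sqrt{\alpha_2}$ must be independent modulo $(\tilde K_3^\times)^2$, which fails only if $\alpha \in (\tilde K_3^\times)^2 \cap K_3^\times$: in the $S_3$ case this group mod $(K_3^\times)^2$ is generated by $d := \mathrm{disc}(K_3)$, and both options are excluded ($\alpha \equiv 1$ contradicts $[F:K_3]=2$, and $\alpha \equiv d$ would give non-square norm $d^3 \equiv d$), while in the $C_3$ case $\tilde K_3 = K_3$ collapses the intersection to $(K_3^\times)^2$. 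Finally, the cubic resolvent of the resulting $S_4$ (resp.\ $A_4$) extension is $K_3$: its stabilizer in $\Gal(\tilde F/\Q)$ is the preimage of a point-stabilizer in the $S_3$ (resp.\ $C_3$) quotient acting on the conjugates of $K_3$, whose fixed field is exactly $K_3$.

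For the correspondence, I fix an $S_4$ (resp.\ $A_4$) quartic $K$ with cubic resolvent $K_3$ and enumerate quadratic subextensions of $\tilde K/K_3$; these are in bijection with index-$2$ subgroups of $\Gal(\tilde K/K_3)$, which is $D_4$ (resp.\ $V_4$), yielding three quadratic extensions $F_i = K_3(\sqrt{\beta_i})$ with $\beta_i \in K_3^\times/(K_3^\times)^2$. Since the three $F_i$ generate a common $V_4$-subextension of $\tilde K/K_3$, their Kummer classes multiply to a trivial class: $\beta_1\beta_2\beta_3 \equiv 1 \pmod{(K_3^\times)^2}$, forcing $\prod \nrm(\beta_i) \in (\Q^\times)^2$. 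In the $S_3$-cubic case, one of the $F_i$ equals $\tilde K_3 = K_3(\sqrt d)$, whose class has non-square norm $d^3 \equiv d$; the product of the remaining two norms is then $\equiv d$, non-square, so exactly one of those $\beta_i$ has square norm, giving the unique square-norm twist and hence the one-to-one correspondence. In the $C_3$-cubic case, $\Gal(K_3/\Q) \simeq C_3$ cyclically permutes the three $F_i$ via its action on $V_4 \leq A_4$, so the $\beta_i$ are $C_3$-conjugate modulo squares; consequently the three norms agree in $\Q^\times/(\Q^\times)^2$, and since their product is a square, each one is itself a square, so all three $F_i$ qualify, giving the three-to-one correspondence.

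The main obstacle is the final counting step: verifying which of the three candidate quadratic subfields of $\tilde K$ above $K_3$ satisfies the square-norm condition. The key asymmetry is that $\mathrm{disc}(K_3)$ is a non-square in the $S_3$ case (singling out a unique $F$) but a square in the $C_3$ case, where the full $C_3$-symmetry among the three candidates lets all three satisfy the condition simultaneously.
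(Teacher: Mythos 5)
The paper itself offers no proof of this lemma (it defers entirely to Section 2 of Cohen--Thorne), so your write-up is doing real work, and most of it is sound: the forward direction (cutting the wreath product down to $V\rtimes S_3\simeq S_4$, resp. $V\rtimes C_3\simeq A_4$, and ruling out degeneration via the kernel $\{1,d\}$ of $K_3^\times/(K_3^\times)^2\to\tilde K_3^\times/(\tilde K_3^\times)^2$) is correct, and the $C_3$/$A_4$ half of the reverse direction is complete --- the three classes $\beta_i$ are $C_3$-conjugate, so their norms coincide in $\Q^\times/(\Q^\times)^2$, and a cube there equals the element itself.

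There is, however, a genuine gap in the $S_3$ reverse direction, at the sentence ``the product of the remaining two norms is then $\equiv d$, non-square, so exactly one of those $\beta_i$ has square norm.'' From $\nrm(\beta_2)\nrm(\beta_3)\equiv d$ you may only conclude $\nrm(\beta_2)\not\equiv \nrm(\beta_3)$ in $\Q^\times/(\Q^\times)^2$; nothing rules out $\nrm(\beta_2)\equiv x$ and $\nrm(\beta_3)\equiv xd$ with $x$ a non-square, in which case neither candidate qualifies and your forward map fails to be surjective. Equivalently, you are silently assuming the norm map has nontrivial kernel on the $2$-dimensional subspace $\langle\beta_1,\beta_2\rangle\subset K_3^\times/(K_3^\times)^2$, which is exactly what has to be proved. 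Unlike the $C_3$ case, no symmetry is available: the three index-$2$ subgroups of $D_4=\Gal(\tilde K/K_3)$ (the normal $V_4$, a non-normal $V_4'$, and a $C_4$) are pairwise non-conjugate in $S_4$. The fix is to positively exhibit the square-norm class as the fixed field of the non-normal $V_4'=\{e,(12),(34),(12)(34)\}$ inside $D_4=\mathrm{Stab}(\{12|34\})$: either explicitly, taking $\sqrt{\beta_2}=\theta_1+\theta_2-\theta_3-\theta_4$ and noting that the product of its three conjugates over the conjugates of $K_3$ is $S_4$-invariant, hence rational; or group-theoretically, observing that the character $\epsilon\colon S_4\to\{\pm1\}$ recording the sign by which $g$ multiplies $\prod_i\sqrt{\beta_2^{(i)}}$ satisfies $\epsilon((12))=1$ (the transposition $(12)$ lies in $V_4'$ so fixes $\sqrt{\beta_2^{(1)}}$, and it swaps the other two square roots with signs multiplying to $1$ since $(12)^2=e$), whence $\epsilon$ is trivial because it factors through $S_4^{\mathrm{ab}}$, which is generated by the class of a transposition. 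With one square-norm class in hand, your parity argument then correctly yields uniqueness.
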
 
This correspondence is described in detail in, for example, Section 2 of \cite{CohenThorneQuartic}.

\begin{remark}
Fix a cubic field, $K_3$. For each $L=K_3(\sqrt{\alpha})$ where $\alpha \in K_3^\times/(K_3^\times)^2$  and $N_{K_3/\mathbb{Q}}(\alpha)$ is a square, there is an $S_4$ or $A_4$-quartic extension $K/\mathbb{Q}$ with cubic resolvent $K_3$. Use Lemma \ref{lem:quadforS4} and observe that $\tilde L = \tilde K$. For our purposes, we will fix $K_3$ and range over quadratic extensions of $K_3$ as in Lemma \ref{lem:quadforS4} to range over $S_4$-quartic extensions of $\mathbb{Q}$ with cubic resolvent $K_3$. We will then consider a fixed elliptic curve $E$ over these extensions, and consider various differences in rank. 
\end{remark}

\subsection{Measuring rank growth in $S_4$ and $A_4$-quartic extensions}

Our aim now is to show that measuring rank growth of an elliptic curve $E$ from $\mathbb{Q}$ to $S_4$ or $A_4$-quartic extensions $K/\mathbb{Q}$ with cubic resolvent $K_3/\mathbb{Q}$ is a matter of measuring the rank growth of $E$ from $K_3$ to a quadratic extension $F/K_3$, namely the quadratic extension of Lemma \ref{lem:quadforS4}. 

\begin{lemma}\label{lem:rankgrowth}
Let $E$ be an elliptic curve defined over $\Q$, $K$ be an $S_4$ or $A_4$-quartic extension of $\mathbb{Q}$ with cubic resolvent $K_3$, and $F/K_3$ be the quadratic extension of Lemma \ref{lem:quadforS4}. Then

\begin{equation}
    \mathrm{rk}(E/K) - \mathrm{rk}(E/\mathbb{Q}) = \mathrm{rk}(E/F) - \mathrm{rk}(E/K_3).
\end{equation}
\end{lemma}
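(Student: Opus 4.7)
The plan is to translate the rank equality into a virtual-character identity on $G = \Gal(\tilde K/\mathbb{Q})$ and then verify the latter by an explicit representation-theoretic calculation.

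First I would set up the Artin-formalism framework. Let $\tilde K$ be the Galois closure of $K$ over $\mathbb{Q}$; by Lemma \ref{lem:quadforS4}, $\tilde K$ also contains $F$. Put $G = \Gal(\tilde K/\mathbb{Q})$, so $G \simeq S_4$ or $A_4$, and for each $L \in \{\mathbb{Q}, K, K_3, F\}$ set $H_L = \Gal(\tilde K/L)$. Since $E$ is defined over $\mathbb{Q}$, the $\mathbb{Q}[G]$-module $M := E(\tilde K) \otimes_{\mathbb{Z}} \mathbb{Q}$ satisfies $M^{H_L} = E(L) \otimes \mathbb{Q}$, and Frobenius reciprocity gives
\begin{equation*}
\rk(E/L) \;=\; \dim_{\mathbb{Q}} M^{H_L} \;=\; \langle \chi_M, \mathbf{1}_{H_L}^G \rangle_G,
\end{equation*}
where $\chi_M$ is the character of $M$ and $\mathbf{1}_{H_L}^G$ denotes the permutation character of $G$ on $G/H_L$. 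Pairing with $\chi_M$ therefore reduces the rank identity to the character identity
\begin{equation*}
\mathbf{1}_{H_K}^G + \mathbf{1}_{H_{K_3}}^G \;=\; \mathbf{1}_G + \mathbf{1}_{H_F}^G
\end{equation*}
in the representation ring of $G$.

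The next step is to identify the relevant subgroups. When $G = S_4$, $H_K$ is a point stabilizer $\simeq S_3$ and $H_{K_3}$ is the stabilizer of an unordered partition of $\{1,\dots,4\}$ into two pairs (a copy of $D_4$). Lemma \ref{lem:quadforS4} pins $F$ down as a specific quadratic subfield of $K \cdot K_3 = \tilde K^{H_K \cap H_{K_3}}$, from which one reads off that $H_F$ is the unique index-$2$ subgroup of $H_{K_3}$ containing $H_K \cap H_{K_3}$, namely the non-normal Klein four group $\langle (12),(34)\rangle$. When $G = A_4$, $H_K$ is a Sylow $3$-subgroup and $H_{K_3} = V_4$ is the normal Klein four; its three index-$2$ subgroups are permuted transitively by $A_4/V_4$ in accordance with the $3$-to-$1$ correspondence in Lemma \ref{lem:quadforS4}, and any one of them serves as $H_F$.

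A short character-table calculation then shows that in both cases
\begin{equation*}
\mathbf{1}_{H_K}^G - \mathbf{1}_G \;=\; \rho_{\mathrm{std}} \;=\; \mathbf{1}_{H_F}^G - \mathbf{1}_{H_{K_3}}^G,
\end{equation*}
where $\rho_{\mathrm{std}}$ denotes the standard $3$-dimensional irreducible representation of $G$. This establishes the character identity and hence the lemma. The main obstacle is really the correct identification of $H_F$ inside $G$ from Lemma \ref{lem:quadforS4}; once that is in hand, the remainder of the argument is a routine exercise in finite-group representation theory.
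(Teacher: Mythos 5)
Your proposal is correct and takes essentially the same route as the paper: both reduce the rank identity via Frobenius reciprocity applied to $E(\tilde K)\otimes\mathbb{Q}$ to the permutation-character identity $\mathbf{1}_{H_K}^G + \mathbf{1}_{H_{K_3}}^G = \mathbf{1}_G + \mathbf{1}_{H_F}^G$, which is then checked by decomposing the induced trivial characters into irreducibles ($\mathbf{1}_{H_F}^G = \mathbf{1} + \chi_\theta + \chi_{\mathrm{std}}$, etc.). Your explicit identification of $H_F$ as the index-two subgroup of $H_{K_3}$ containing $H_K\cap H_{K_3}$ is a slightly more detailed justification of a step the paper simply asserts ``by the group theory of $G$.''
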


The rank relation of Lemma \ref{lem:rankgrowth} is a manifestation of the following more general fact \cite{DD}*{Page 572}. Suppose $L/k$ is a Galois extension of number fields with $G = \mathrm{Gal}(L/k)$, and $E/k$ is an elliptic curve. For $H \leq G$, write $\mathbf{1}_H$ for the trivial character on $H$. If there are subextensions $K_i/k$ and $K_j'/k$ of $L$,  cut out by subgroups $H_i$ and $H'_j$ of $G$, such that
\begin{equation}\label{eq:brauerrel}
\bigoplus_i \mathrm{Ind}_{H_i}^G \mathbf{1}_{H_i} \simeq  \bigoplus_j \mathrm{Ind}_{H_j'}^G \mathbf{1}_{H_j'}
\end{equation}
as complex representations of $G$, then
\begin{equation}\label{eq:rankrel}
\sum_i \mathrm{rk}(E/K_j) = \sum_j \mathrm{rk}(E/K'_j).
\end{equation}
The see this relation on the ranks, let $\chi_L$ be the character of the representation of the complex representation of the Mordell-Weil group, $E(L) \otimes \mathbb{C}$, and note that 
\begin{equation}\label{eq:rankprod}
\mathrm{rk}(E/K_i) = \langle  \mathrm{Ind}_{H_j}^G  \mathbf{1}_{H_i}, \chi_L \rangle.
\end{equation}
The same statement can be made for the rank of $E$ over each $K_j'$ as well. Using \eqref{eq:rankprod} together with \eqref{eq:brauerrel} yields \eqref{eq:rankrel}.
\begin{proof}[Proof of Lemma \ref{lem:rankgrowth}]
Consider the following subgroups of $G = \mathrm{Gal}(\tilde K/\mathbb{Q})\simeq S_4$ which fix the subfields $K,K_3$, and $F$ of $\tilde K$. Let $H_K \simeq S_3$ be the subgroup fixing $K$, $H_{K_3} \simeq D_8$  be the subgroup fixing $K_3$, and $H_F \simeq V_4$, the Klein four group, be the subgroup fixing $F$. Then one can verify 
\begin{equation}\label{eq:S4brauer}
\mathrm{Ind}_{H_{K}}^G \mathbf{1}_{H_K} \oplus \mathrm{Ind}_{H_{K_3}}^G \mathbf{1}_{H_{K_3}}  \simeq \mathbf{1} \oplus \mathrm{Ind}_{H_F}^G \mathbf{1}_{H_F}.
\end{equation}
The lemma now follows from \eqref{eq:rankrel}. Note that relations like that of \eqref{eq:S4brauer} is an example of those provided in \cite{BD}.
\end{proof}

\section{The 2-Selmer Groups and Quadratic Twists}\label{sec:twistsandsel}
In the previous section, we established that the rank growth from $\Q$ to $K$ is the same as the rank growth from $K_3$ to a quadratic extension of $K_3$ determined by $K$. So we may restrict ourselves to the study of rank growth in quadratic extensions. This is governed by the theory of quadratic twists. 

\subsection{The 2-Selmer Group} \label{seldefs}
We now recall the definition of the 2-Selmer group for an elliptic curve $E$ over a number field $L$. The multiplication-by-2 map on $E$ gives rise to a short exact sequence of Galois modules: 
$$0 \rightarrow E[2] \rightarrow E(\bar{\mathbb{Q}}) \overset{\times 2}{\rightarrow}E(\bar{\mathbb{Q}}) \rightarrow 0.$$

This in turn yields a long exact sequence of Galois cohomology groups, which, after quotienting appropriately, gives rise to the following diagram.

\[
\begin{tikzcd}[column sep=small]
0 \arrow[r] & E(L)/2E(L) \arrow[r] \arrow[d]   & {H^1(L, E[2])} \arrow[r] \arrow[d] & {H^1(L,E)}[2] \arrow[d] \arrow[r] & 0 \\
0 \arrow[r] & \prod_v E(L_v)/2E(L_v) \arrow[r] & {\prod_v H^1(L_v, E[2])} \arrow[r] & {\prod_v H^1(L_v, E)}[2] \arrow[r] & 0
\end{tikzcd}
\]
Now, define subgroups  $H^1_f(L_v, E[2])$ of each local cohomology $H^1(L_v, E[2])$ as 
$$H^1_f(L_v, E[2]) \coloneqq \mathrm{Image}\left( E(L_v)/2E(K_v) \rightarrow H^1(L_v, E[2]) \right).$$

\begin{definition}\label{def:Sel2}
The \textit{2-Selmer group of $E/L$}, denoted $\Sel(E/L)$, is the $\FF$-vector space defined by the exactness of the following sequence:
$$0 \rightarrow \Sel(E/L) \rightarrow H^1(L, E[2]) \rightarrow \bigoplus_{v}H^1(L_v, E[2])/H^1_f(L_v, E[2]).$$
\end{definition}
We may think of the elements of the 2-Selmer group as being the classes in $H^1(L, E[2])$ which, for every place $v$ of $L$, land in the image of $E(L_v)/2E(L_v)$. That is, elements of $H^1(L, E[2])$ which everywhere locally satisfy the local conditions determined by  $H^1_f(L_v, E[2])$.

Further, the 2-Selmer group fits into a short exact sequence
$$0 \rightarrow E(L)/2E(L) \rightarrow \Sel(E/L) \rightarrow \Sha(E/L)[2] \rightarrow 0,$$
where $\Sha(E/L)[2]$ are the elements of the Shafarevich-Tate group of $E/L$ with order dividing 2. We have
\begin{equation}\label{eq:rkbound}
    \dim_{\FF}\Sel(E/L) = \left(\mathrm{rk}(E/L) + \dim_{\FF}E(L)[2]\right) + \dim_{\FF}\Sha(E/L)[2]
\end{equation}
and further that $\mathrm{rk}(E/L) \leq \dim_{\FF}\Sel(E/L).$ It is by this relation that we'll access the ranks of the various elliptic curves and twists discussed later in the paper. 

\subsection{Quadratic Twists}\label{subsec:quadtwists}
Suppose our elliptic curve $E/L$ is given in short Weierstrass form
$$E: y^2=x^3+Ax+B$$
with $A,B \in L$. A \textit{quadratic twist}, $E^F/L$, of $E/L$ is an elliptic curve of the form
\begin{equation}\label{eq:twisted}
E^F: \delta y^2 = x^3+Ax+B
\end{equation}
where $\delta \in L^\times/ (L^\times)^2$ and $F = L(\sqrt{\delta})$. With a change of variables, one can put \eqref{eq:twisted} in short Weierstrass form:
$$E^F : y^3 = x^3+a\delta^2x + b\delta^3.$$

An elliptic curve $E/L$ and a quadratic twist $E^F/L$ are not, in general, isomorphic as elliptic curves over $L$ but \textit{are} isomorphic as elliptic curves over $F$. In particular, quadratic twists will be the main tool for measuring growth in quadratic extensions as we have
$$\mathrm{rk}(E^F/L) = \mathrm{rk}(E/F)-\mathrm{rk}(E/L).$$

\subsection{The 2-Selmer of quadratic twists}
In \cite{MR}, Mazur and Rubin give results in which understanding the behavior of an elliptic curve $E/L$ and its 2-Selmer group, $\text{Sel}_2(E/L)$, locally at only a few places of $L$ is sufficient to, under some mild conditions, understand the relation between $\dim_{\mathbb{F}_2}\text{Sel}_2(E/L)$ and $\dim_{\mathbb{F}_2}\text{Sel}_2(E^F/L)$ for some quadratic twists, $E^F$, of $E$.

One defines the 2-Selmer group of a twist $E^F/L$ following the definition laid out in Section \ref{seldefs}, just with $E^F$ in place of $E$. Important to note that is $E^F[2]$ and $E[2]$ are isomorphic as Galois modules, so we may view both Selmer groups inside $H^1(L, E[2])$.

Lemma \ref{lem:rankgrowth} shows that understanding the rank growth in quadratic extensions will be sufficient for understanding rank growth in the quartic extensions of interest. In the reminder of this section we will record some results from \cite{MR} on how local information about $E$ relates the 2-Selmer rank of $E$ to the 2-Selmer rank of quadratic twists of $E$. 

\begin{lemma}[\cite{MR}*{Lemma 2.2}]\label{lem:h1f}
With the notation as above:
\begin{itemize} 
    \item If $v \nmid 2\infty$, then $\dimf H^1_f(L_v, E[2]) = \dimf E(L_v)[2]$.
    \item If $v \nmid 2\infty$ and $E$ has good reduction at $v$, then 
    $$H^1_f(L_v, E[2]) \cong E[2]/(\mathrm{Frob}_{v} -1)E[2].$$
\end{itemize}
\end{lemma}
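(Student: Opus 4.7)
The plan is to reduce both parts to statements about $E(L_v)/2E(L_v)$ via the Kummer sequence, and then exploit the local structure of $E(L_v)$. From the long exact sequence in Galois cohomology attached to multiplication by $2$ on $E(\bar L_v)$, one gets an injection $E(L_v)/2E(L_v)\hookrightarrow H^1(L_v,E[2])$ whose image is, by definition, $H^1_f(L_v,E[2])$. Hence in both parts I am really studying the group $E(L_v)/2E(L_v)$.

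For part (1), I would use the filtration $E(L_v)\supseteq E^1(L_v)=\hat E(\mathfrak{m}_v)$, with finite quotient $Q=E(L_v)/E^1(L_v)$. Since $v\nmid 2$, the formal group $\hat E(\mathfrak{m}_v)$ is topologically isomorphic (via the formal logarithm) to $(\mathfrak{m}_v,+)$, and $2$ is a unit in $\mathcal O_{L_v}$, so multiplication by $2$ is an automorphism of $E^1(L_v)$. The snake lemma applied to $0\to E^1(L_v)\to E(L_v)\to Q\to 0$ then gives $E(L_v)[2]\cong Q[2]$ and $E(L_v)/2E(L_v)\cong Q/2Q$; since $Q$ is a finite abelian group, $|Q[2]|=|Q/2Q|$. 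Combined with the Kummer identification above, this yields $\dimf H^1_f(L_v,E[2])=\dimf E(L_v)[2]$.

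For part (2), good reduction at $v\nmid 2$ forces the $G_{L_v}$-action on $E[2]$ to be unramified, so it factors through $\Gal(L_v^{\mathrm{ur}}/L_v)\cong\widehat{\mathbb{Z}}$, topologically generated by $\mathrm{Frob}_v$. For any finite $\widehat{\mathbb{Z}}$-module $M$, continuous cohomology gives $H^1(\widehat{\mathbb{Z}},M)\cong M/(\mathrm{Frob}_v-1)M$, so $H^1_{\mathrm{ur}}(L_v,E[2])\cong E[2]/(\mathrm{Frob}_v-1)E[2]$. It remains to identify $H^1_f(L_v,E[2])$ with $H^1_{\mathrm{ur}}(L_v,E[2])$: the inclusion $H^1_f\subseteq H^1_{\mathrm{ur}}$ is routine because Kummer classes coming from $L_v$-points extend to the Néron model over $\mathcal O_{L_v}$, and equality follows from a size comparison using part (1) together with the identities $|E[2]/(\mathrm{Frob}_v-1)E[2]|=|E[2]^{\mathrm{Frob}_v}|=|\tilde E(k_v)[2]|=|E(L_v)[2]|$, where the last equality uses good reduction and $v\nmid 2$.

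The main obstacle is the identification $H^1_f=H^1_{\mathrm{ur}}$ at good reduction. The inclusion is formal, but equality is most cleanly established by the order count above, which requires keeping careful track of the effect of reduction mod $\mathfrak{m}_v$ on $2$-torsion. In particular, one needs $|E(L_v)[2]|=|\tilde E(k_v)[2]|$, a standard consequence of Hensel's lemma applied to the fact that the kernel of multiplication by $2$ is étale over $\mathcal O_{L_v}$ when $v\nmid 2$ and $E$ has good reduction.
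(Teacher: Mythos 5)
Your proof is correct. Note that the paper does not actually prove this lemma --- it is quoted verbatim from Mazur--Rubin \cite{MR}*{Lemma 2.2} --- and your argument is essentially the standard one underlying their proof: Kummer injectivity plus the finite-index pro-$p$ formal subgroup for part (1), and $H^1_f=H^1_{\mathrm{ur}}$ via the inclusion-plus-counting argument for part (2). One small imprecision: the formal logarithm need not identify all of $\hat E(\mathfrak{m}_v)$ with $(\mathfrak{m}_v,+)$ (only a subgroup of finite level), but the fact you actually need --- that $[2]$ is an automorphism of $E^1(L_v)$ because $2$ is a unit in $\mathcal O_{L_v}$, equivalently because $E^1(L_v)$ is pro-$p$ with $p$ odd --- is standard and does not require the logarithm.
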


\begin{definition}
If $T$ is a finite set of places of $L$. Let $\mathrm{loc}_T$ be the sum of the localization maps for each place of $T$, 
$$\mathrm{loc}_T: H^1(L, E[2]) \rightarrow \bigoplus_{v \in T} H^1(L_v, E[2]).$$
Also set 
$$V_T = \mathrm{loc}_T\left( \Sel(E/L)\right) \subset \bigoplus_{v \in T} H^1_f(L_v, E[2]).$$
\end{definition}

We finish the section be recalling two results from \cite{MR} that we'll later use to control the rank of the $2$-Selmer groups in the quadratic extension of Lemma \ref{lem:rankgrowth}.

\begin{lemma}[\cite{MR}*{Proposition 3.3}]\label{MR3.3}
Let $E/L$ be an elliptic curve, and let $F/L$ be a quadratic extension in which the following places of $L$ split:
\begin{itemize}
    \item all primes where $E$ has additive reduction;
    \item all places $v$ where $E$ has multiplicative reduction such that $\mathrm{ord}_v(\Delta_E)$ is even;
    \item all primes above 2;
    \item all real places $v$ with $(\Delta_E)_v > 0$.
\end{itemize}
Further, suppose that all $v$ where $E$ has multiplicative reduction and $\mathrm{ord}_v(\Delta_E)$ is odd are unramified in $F/L$. 

Let $T$ be the set of finite primes $\mathfrak p$ of $L$ such that $F/L$ is ramified at $\mathfrak p$ and $E(L_\mathfrak{p})[2]\neq 0$. Then, 
\begin{equation} \label{selrelation3.3}
    \dim_{\FF}\Sel(E^F/L) = \dim_{\FF}\Sel(E/L) - \dim_{\mathbb{F}_2}(V_T) + d
\end{equation} for some $d$ such that 
$$0 \leq d \leq \dimf \left(\bigoplus_{\mathfrak p \in T} H^1_f(L_\mathfrak{p}, E[2])/V_T \right)$$
and 
$$d \equiv \dimf \left(\bigoplus_{\mathfrak p \in T} H^1_f(L_\mathfrak{p}, E[2])/V_T \right) \pmod{2}.$$
\end{lemma}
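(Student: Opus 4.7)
The plan is to compare $\Sel(E/L)$ and $\Sel(E^F/L)$ inside the common ambient group $H^1(L, E[2])$, using the identification $E[2]\cong E^F[2]$ as Galois modules, and to reduce the statement to a controlled comparison of local conditions at the primes of $T$. First I would verify that $H^1_f(L_v, E[2])$ and $H^1_f(L_v, E^F[2])$ agree at every $v\notin T$: if $v$ splits in $F/L$ the twist is locally trivial and the conditions coincide, which the hypotheses arrange at all real places with $(\Delta_E)_v>0$, at all primes above $2$, at places of additive reduction, and at multiplicative places with $\mathrm{ord}_v(\Delta_E)$ even; at a finite $v\nmid 2$ of good reduction unramified in $F/L$ both conditions equal $H^1_{\mathrm{unr}}(L_v, E[2])$ by Lemma \ref{lem:h1f}; and at a multiplicative place with $\mathrm{ord}_v(\Delta_E)$ odd unramified in $F/L$ a direct local computation again gives agreement. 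The remaining finite places are exactly those in $T$.

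Setting $\Sel^{(0)} := \ker\bigl(\mathrm{loc}_T\colon \Sel(E/L)\to\bigoplus_{v\in T}H^1(L_v,E[2])\bigr)$, the previous step shows this coincides with the corresponding kernel on $\Sel(E^F/L)$, so
\begin{equation*}
\dimf \Sel(E/L) = \dimf \Sel^{(0)} + \dimf V_T,\qquad \dimf \Sel(E^F/L) = \dimf \Sel^{(0)} + \dimf V_T^F,
\end{equation*}
where $V_T^F := \mathrm{loc}_T(\Sel(E^F/L))\subseteq\bigoplus_{v\in T}H^1_f(L_v, E^F[2])$. Writing $d:=\dimf V_T^F$ and $h_T:=\dimf\bigoplus_{v\in T}H^1_f(L_v,E[2])$, the lemma reduces to $0\leq d\leq h_T-\dimf V_T$ together with $d\equiv h_T-\dimf V_T\pmod{2}$.

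Next I would invoke global duality. Let $\Sel^{\mathrm{rel}}$ denote the Selmer group obtained by dropping the local condition at each $v\in T$. Poitou--Tate global duality combined with the Weil-pairing self-duality of $E[2]$ shows that $W:=\mathrm{loc}_T(\Sel^{\mathrm{rel}})\subseteq\bigoplus_{v\in T}H^1(L_v,E[2])$ is a maximal isotropic subspace of dimension $h_T$ for the sum of local Tate pairings, and a direct local computation shows that $X:=\bigoplus_{v\in T}H^1_f(L_v, E[2])$ and $X':=\bigoplus_{v\in T}H^1_f(L_v, E^F[2])$ are both maximal isotropic and transverse (at a finite prime of good reduction away from $2$, a ramified quadratic twist sends the unramified Lagrangian to its complement). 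With $V_T=W\cap X$ and $V_T^F=W\cap X'$, the bound $d\leq h_T-\dimf V_T$ follows by projecting $W$ onto $X$ along $X'$: the projection has kernel $V_T^F$ and image containing $V_T$, giving $d=h_T-\dimf\pi_X(W)\leq h_T-\dimf V_T$. For the parity I would use the quadratic refinement of the local Tate pairing coming from the Weil pairing on $E[2]$, under which $W$, $X$, and $X'$ are all totally isotropic, and deduce by linear algebra that $\dimf(W\cap X)+\dimf(W\cap X')\equiv\dimf X\pmod{2}$.

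I expect this last input to be the main obstacle. The maximal isotropy of $W$ is a genuine consequence of global Poitou--Tate duality, and the parity of $d$ cannot be read off from the alternating pairing alone; over $\FF$ every one-dimensional subspace of a symplectic plane is maximal Lagrangian, so isotropy alone does not control intersections with complementary Lagrangians, and the needed rigidity must come from a quadratic refinement whose compatibility with $H^1_f$ and $H^1_f^F$ must be verified. By contrast, the transversality of $X$ and $X'$ and the dimension-counting bound above are essentially local and routine.
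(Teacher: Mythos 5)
The paper itself offers no proof of this lemma: it is quoted verbatim from Mazur--Rubin and used as a black box, so your reconstruction can only be measured against their argument rather than against anything in this text. Your skeleton is the right one and matches the standard treatment. The hypotheses do force the local conditions for $E$ and $E^F$ to agree at every place outside $T$ (though you should also dispose of two cases your list skips: finite places ramified in $F/L$ with $E(L_v)[2]=0$, where both conditions vanish because $\dimf H^1_f(L_v,E[2])=\dimf E(L_v)[2]=0$ and $E[2]\cong E^F[2]$, and real $v$ with $(\Delta_E)_v<0$, where $E(L_v)/2E(L_v)=E^F(L_v)/2E^F(L_v)=0$). The two Selmer groups are then pinched between the strict and relaxed groups at $T$, Poitou--Tate (via the Greenberg--Wiles formula and reciprocity) makes $W=\mathrm{loc}_T(\Sel^{\mathrm{rel}})$ isotropic of dimension $h_T$, the primes of $T$ are good-reduction primes away from $2$ so the unramified and ramified conditions are transverse Lagrangians, and your projection argument gives the inequality. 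All of that is correct.

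The genuine gap is the one you flag yourself: the congruence $\dimf(W\cap X)+\dimf(W\cap X')\equiv h_T\pmod 2$ is simply false for three Lagrangians of a merely alternating $\FF_2$-pairing (take $h_T=1$, $X=\la e_1\ra$, $X'=\la e_2\ra$, $W=\la e_1+e_2\ra$; the left side is $0$), so as written the parity assertion is unproven. What rescues it is exactly the quadratic refinement you gesture at: one needs local quadratic forms $q_v$ whose polarization is the Tate pairing, the $q_v$-isotropy of the images of $E(L_v)/2E(L_v)$ and of $E^F(L_v)/2E^F(L_v)$, and the isotropy of $W$ for $\sum_{v\in T}q_v$ (which follows from global reciprocity for the $q_v$ once the local images outside $T$ are known to be isotropic); the parity is then the statement that two transverse maximal isotropics of a nondegenerate $\FF_2$-quadratic space of dimension $2h_T$ lie in the same orbit under the special orthogonal group exactly when $h_T$ is even. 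None of this is constructed in your write-up, and it is the non-formal heart of the result; it is precisely the content of the metabolic-space formalism in the Klagsbrun--Mazur--Rubin paper cited here (and of Poonen--Rains), while Mazur--Rubin's original argument routes the parity through their own global duality apparatus rather than leaving it to symplectic linear algebra. So: right architecture and a correct inequality, but the parity claim still has to be imported or proved.
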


An immediate consequence of the above is the following lemma.

\begin{lemma}[\cite{MR}*{Corollary 3.4}]\label{MR3.4}
For an elliptic curve $E/L$ and for $F/L$ and $T$ as defined in Lemma \ref{MR3.3}, we have:
\begin{enumerate}
    \item If $\dimf\left(\bigoplus_{\mathfrak p \in T} H^1_f(L_\mathfrak{p}, E[2])/V_T \right) \leq 1$ then 
    $$\dim_{\FF}\Sel(E^F/L)=\dim_{\FF}\Sel(E/L) - 2 \dimf V_T + \sum_{\mathfrak p \in T} \dimf H^1_f(L_\mathfrak{p}, E[2]).$$
    \item %If $E(L_\mathfrak{p})[2] = 0$ for all $\mathfrak p \in T$, 
    If $T$ is empty, then $\dim_{\FF}\Sel(E^F/L) = \dim_{\FF}\Sel(E/L)$. 
\end{enumerate}
\end{lemma}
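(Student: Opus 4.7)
The plan is to deduce both statements as direct corollaries of the formula in Lemma \ref{MR3.3}. Set $Q := \bigoplus_{\mathfrak p \in T} H^1_f(L_\mathfrak p, E[2])/V_T$ so that Lemma \ref{MR3.3} reads
\begin{equation*}
\dim_{\FF}\Sel(E^F/L) = \dim_{\FF}\Sel(E/L) - \dimf V_T + d, \qquad 0 \le d \le \dimf Q, \qquad d \equiv \dimf Q \pmod 2.
\end{equation*}

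For part (1), the hypothesis $\dimf Q \le 1$ leaves only two possibilities, and in each case the bounds together with the parity congruence pin $d$ down to $\dimf Q$ itself: if $\dimf Q = 0$ then $d = 0$, and if $\dimf Q = 1$ then $d$ must be the unique integer in $[0,1]$ of odd parity. I would then expand
\begin{equation*}
\dimf Q = \sum_{\mathfrak p \in T} \dimf H^1_f(L_\mathfrak p, E[2]) - \dimf V_T
\end{equation*}
using the defining short exact sequence $0 \to V_T \to \bigoplus_{\mathfrak p \in T} H^1_f(L_\mathfrak p, E[2]) \to Q \to 0$ and substitute back into Lemma \ref{MR3.3}; collecting the two resulting copies of $-\dimf V_T$ yields precisely the claimed formula.

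For part (2), I would observe that the defining property of $T$ in Lemma \ref{MR3.3} requires $E(L_\mathfrak p)[2] \ne 0$ at every $\mathfrak p \in T$, so the hypothesis $E(L_\mathfrak p)[2] = 0$ for all $\mathfrak p \in T$ forces $T = \emptyset$. Consequently $V_T = 0$ and $Q = 0$, so $\dimf Q = 0 \le 1$, and part (1) applied with an empty sum immediately yields $\dim_{\FF}\Sel(E^F/L) = \dim_{\FF}\Sel(E/L)$.

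There is no real obstacle here; the deduction is mechanical once Lemma \ref{MR3.3} is in hand, and consists of little more than bookkeeping. The only point worth double-checking is that every $\mathfrak p \in T$ is a finite prime not lying above $2$ — this is built into the setup of Lemma \ref{MR3.3}, since primes above $2$ are required to split in $F/L$ and are therefore unramified and excluded from $T$ — which also ensures that Lemma \ref{lem:h1f} can be invoked, if desired, to reinterpret $\dimf H^1_f(L_\mathfrak p, E[2]) = \dimf E(L_\mathfrak p)[2]$ at each such prime and obtain part (2) via a direct vanishing argument rather than through part (1).
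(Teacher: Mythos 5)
Your deduction is correct and matches the paper's treatment: the paper offers no independent proof, simply noting the lemma is an immediate consequence of Lemma \ref{MR3.3} (citing Mazur--Rubin), and your pinning down of $d=\dimf Q$ via the parity constraint, the substitution $\dimf Q=\sum_{\mathfrak p\in T}\dimf H^1_f(L_{\mathfrak p},E[2])-\dimf V_T$, and the observation that the hypothesis of part (2) forces $T=\emptyset$ (or, equivalently, kills each $H^1_f(L_{\mathfrak p},E[2])$ via Lemma \ref{lem:h1f}) is exactly the intended bookkeeping.
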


We will use Lemma \ref{MR3.4} setting $L$ to be some admissible cubic resolvent, $K_3$, to understand the $2$-Selmer rank of some square norm twists by controlling $\dim_{\FF}\mathrm{loc}_T (\Sel(E/K_3)$ and $\dim_{\FF} H^1_f((K_3)_\mathfrak{p}, E[2])$ for each $\mathfrak{p} \in T$.

\section{Twisting by Square Norm Extensions}

In this section we will consider elliptic curves $E/\mathbb{Q}$ together with some $K_3$ which will always be assumed to be an \emph{admissible cubic resolvent for $E$} as in Definition \ref{def:admissibleK3}.  Recall that among other conditions, we require $E(K_3)[2]=0$ and $K_3$ to $\mathbb{Q}(E[2])$ be linearly disjoint. %That is, $\mathrm{Cl}(K_3)[2]=0$, $K_3 \not \subset \mathbb{Q}(E[2])$, and there is a place $v_0$ of $K_3$ such that either $v_0$ is real and $(\Delta_E)_{v_0} < 0$ or $v_0 \nmid 2\infty$, $E$ has multiplicative reduction at $v_0$ and $\mathrm{ord}_{v_0}(\Delta_E)$ is odd.

We are concerned with quadratic twists $E^F$ over $K_3$ where we impose conditions on $F$. In Section 1 we introduced Definition \ref{def:sqtwist} defining \emph{square norm twists} to keep track of conditions on the twists. Recall that for an elliptic curve defined over a number field $L$, $E/L$, these are quadratic twists $E^F/L$ of $E/L$ where $F=L(\sqrt{\alpha})$, $\alpha \in L^\times/(L^\times)^2$, and $N_{L/\mathbb{Q}}(\alpha)$ is a square.

We will be interested in the application of the definition above where $L=K_3$, which, as above, will be the cubic resolvent for some quartic $S_4$ extensions of $\mathbb{Q}$. 

Further, define $N_r^\square(E, X)$ as follows to count quadratic extensions $F/K_3$  with bounded conductor, $\mathfrak{f}(F/K_3)$, that give square norm twists $E^F$ of $E$ with $2$-Selmer group of dimension $r$:
\begin{align*}
    N_r^\square(E,X) = \#\{ F=K_3(\sqrt{\alpha}) \mid&  \alpha \in K_3^\times/(K_3^\times)^2,  N_{K_3/\Q}(\alpha)~ \mathrm{a ~square},\\
    &\dim_{\FF}\Sel(E^F/K_3)=r, N_{K_3/\Q}\mathfrak{f}(F/K_3) < X\}.
\end{align*}
With that in mind, in this section we prove the following.

\begin{proposition}\label{prop:manytwists}
Fix an $S_3$
%A4 change
or $C_3$
cubic field $K_3/\mathbb{Q}$, an elliptic curve $E/\mathbb{Q}$, and a nonnegative even integer $r$. Suppose there exists a square norm twist, $E^L/K_3$, of $E/K_3$, with $\dim_{\FF}\mathrm{Sel}(E^L/K_3)=r$. Then we have:
\begin{itemize}
    \item If $\mathrm{Gal}(K_3/\mathbb{Q}) \simeq S_3$ and $\mathrm{Gal}(K_3(E[2])/K_3) \simeq S_3$, then
    %then the number of quadratic extensions $F=K_3(\sqrt{\alpha})$ with $N\alpha$ a square, $D_{F/K_3}<X$, and $\dim_{\FF}\mathrm{Sel}(E^/K_3) = r$, is $\gg X^{1/2}/\log(X)^{3/4}$;
    $$ N_r^\square(E,X) \gg X^{1/2}/\log(X)^{5/6};$$
    \item if $\mathrm{Gal}(K_3/\mathbb{Q}) \simeq S_3$ and $\mathrm{Gal}(K_3(E[2])/K_3) \simeq C_3$, then 
    %the number of quadratic extensions $F=K_3(\sqrt{\alpha})$ with $N\alpha$ a square, $D_{F/K_3}<X$, and $\dim_{\FF}\mathrm{Sel}(E^/K_3) = r$, is $\gg X^{1/2}/\log(X)^{2/3}$.
    $$N_r^\square(E,X) \gg X^{1/2}/\log(X)^{2/3};$$
    \item if $\mathrm{Gal}(K_3/\mathbb{Q}) \simeq C_3$ and  $\mathrm{Gal}(K_3(E[2])/K_3) \simeq S_3$, then
    %then the number of quadratic extensions $F=K_3(\sqrt{\alpha})$ with $N\alpha$ a square, $D_{F/K_3}<X$, and $\dim_{\FF}\mathrm{Sel}(E^/K_3) = r$, is $\gg X^{1/2}/\log(X)^{3/4}$;
    $$ N_r^\square(E,X) \gg X^{1/2}/\log(X)^{8/9};$$
    \item if $\mathrm{Gal}(K_3/\mathbb{Q}) \simeq C_3$ and $\mathrm{Gal}(K_3(E[2])/K_3) \simeq C_3$, then 
    %the number of quadratic extensions $F=K_3(\sqrt{\alpha})$ with $N\alpha$ a square, $D_{F/K_3}<X$, and $\dim_{\FF}\mathrm{Sel}(E^/K_3) = r$, is $\gg X^{1/2}/\log(X)^{2/3}$.
    $$N_r^\square(E,X) \gg X^{1/2}/\log(X)^{7/9}.$$
\end{itemize}
\end{proposition}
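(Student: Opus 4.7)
The plan is to produce many square-norm twists by further twisting the initial twist $E^L/K_3$ guaranteed by hypothesis. I look for $\beta \in K_3^\times/(K_3^\times)^2$ with $\nrm_{K_3/\Q}(\beta) \in (\Q^\times)^2$ such that $F := K_3(\sqrt{L\beta})$ satisfies $\dimf \Sel(E^F/K_3) = r$. Since $\nrm(L)\nrm(\beta)$ is a product of two squares, $E^F$ is again a square-norm twist of $E$, and distinct classes $\beta \in K_3^\times/(K_3^\times)^2$ give distinct $F$'s. To keep the Selmer dimension fixed when twisting $E^L$ by the quadratic character of $K_3(\sqrt{\beta})/K_3$, I would apply Lemma~\ref{MR3.4}(2) with $E^L$ as the base curve. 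This imposes local conditions on $\beta$: $\beta$ must be a local square at each bad place of $E$ (additive reduction, multiplicative reduction with $\mathrm{ord}_v(\Delta_E)$ even, places above $2$, and real places with $(\Delta_E)_v > 0$); $\beta$ must have even valuation at the multiplicative places with $\mathrm{ord}_v(\Delta_E)$ odd (including the distinguished $v_0$); and every odd prime $\mathfrak{q}$ of $K_3$ in the support of $\beta$ must satisfy $E((K_3)_\mathfrak{q})[2] = 0$, i.e.\ $\mathrm{Frob}_\mathfrak{q} \in \Gal(K_3(E[2])/K_3)$ acts on $E[2]$ without fixed points.

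With these conditions fixed, the hypothesis $\mathrm{Cl}(K_3)[2]=0$ makes the valuation map $K_3^\times/(K_3^\times)^2 \to \bigoplus_\mathfrak{q} \FF$ surjective, so one can realize any prescribed valuations on $\beta$ at primes in a ``good'' set $\Sigma$. The square-norm condition then translates prime-by-prime into the linear equation $\sum_{\mathfrak{q}|p} f_\mathfrak{q}\, v_\mathfrak{q}(\beta) \equiv 0 \pmod 2$ at each rational prime $p$. In the $S_3$-cubic cases it is simplest to support $\beta$ on degree-$2$ primes of $K_3$ (whose norms over $\Q$ are automatically squares); in the $C_3$-cubic cases, where $K_3$-primes have residue degree $1$ or $3$, one instead uses pairs of degree-$1$ primes lying above a single rational prime that splits completely in $K_3$. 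The global sign of $\nrm(\beta)$ is controlled by prescribing $\beta$ at the real places of $K_3$, and the finitely many bad-place conditions cut out a coset of finite index in the $\FF$-vector space parametrizing square-norm $\beta$'s.

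For the quantitative lower bound $\gg X^{1/2}/\log(X)^\alpha$ with $\alpha \in \{5/6,\,2/3,\,8/9,\,7/9\}$, I count $\beta$'s with $\nrm_{K_3/\Q}(\mathfrak{f}(F/K_3)) < X$. Each rational prime $p$ appearing in the support contributes $p^2$ to $\nrm(\mathfrak{f})$, so after extracting these square factors the count reduces to enumerating squarefree integers $m \leq X^{1/2}$ whose prime factors lie in a set of rational primes of positive Chebotarev density $\delta$, weighted by the number of valid local configurations at each $p \mid m$. A Landau / Selberg--Delange type estimate then produces a lower bound of order $X^{1/2}/\log(X)^{1-\delta}$, and the four values $\delta = 1/6,\,1/3,\,1/9,\,2/9$ arise from Chebotarev density computations in the normal closure of $\tilde K_3 \cdot K_3(E[2])$ over $\Q$ in each of the four combinations of Galois groups. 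The main technical obstacle is this density bookkeeping, namely identifying the relevant subgroup of a wreath-product Galois group and counting the conjugacy classes that simultaneously yield a valid square-norm configuration at $p$ and a fixed-point-free Frobenius action on $E[2]$.
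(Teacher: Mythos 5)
Your proposal is correct and follows essentially the same route as the paper: reduce to twisting the given square-norm twist $E^L$ by further square-norm classes $\beta$ supported on primes whose Frobenius has order $3$ in $\Gal(K_3(E[2])/K_3)$ (degree-two primes in the $S_3$ case, pairs of split primes in the $C_3$ case), apply Lemma~\ref{MR3.4}(2) to keep the Selmer dimension fixed, and count via Chebotarev plus a Tauberian theorem, yielding the same densities $1/6$, $1/3$, $1/9$, $2/9$. The only cosmetic difference is that the paper packages the local conditions at the bad places by invoking \cite{MR}*{Proposition 4.2} with a ray class field condition $[\mathfrak{b}, N/K_3]=1$ for $N=K_3(8\Delta_E\infty)$, where you impose them directly on $\beta$ using $\mathrm{Cl}(K_3)[2]=0$.
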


\begin{remark}
In the next section, we will prove the existence of the quadratic extension $L/K_3$ from the hypotheses of Proposition \ref{prop:manytwists}. With this, we will use the relationship between the rank growth from $\mathbb{Q}$ to $K$ and the rank growth $K_3$ to a quadratic extensions $F/K_3$ to prove Theorem \ref{thm:mainrank}. 
\end{remark}

The rest of this section will be devoted to proving Proposition \ref{prop:manytwists}. Before proceeding, we first need to enumerate ideals in $K_3$ that allow us to get quadratic extensions for square norm twists. We will then show for each such ideal, there is a square norm twist of $E$ corresponding to that ideal.

\begin{lemma}\label{lem:manyideals}
Suppose $E/\mathbb{Q} $ is an elliptic curve where $K_3$ is an admissible $S_3$-cubic resolvent for $E$ and  $\mathrm{Gal}(K_3(E[2]/K_3)$ is $C_3$ or $S_3$. Let $S$ be the set of the elements of order 3 in $\text{Gal}(K_3(E[2])/K_3)$, and  $N$ be a ray class field of $K_3$.
%be an abelian extension of $K_3$. 
Then the number of ideals $\mathfrak b$ of $K_3$ such that 
\begin{itemize}
    \item $\nrm\mathfrak{b} < X$ and $[\mathfrak{b}, N/K_3]=1$,
    \item and for every prime ideal $\mathfrak{p}$ dividing $\mathfrak{b}$,  $\nrm\mathfrak{p}$ is a square and $\mathrm{Frob}_\mathfrak{p}(K_3(E[2])/K_3) \subset S$
    %\item $\text{Frob}_\mathfrak{p}(K_3(E[2])/K_3) \subset S$, 
    %\item and $[\mathfrak{b}, N/K_3]=1$,
\end{itemize}
is asymptotic to 
$$(C+o(1))\frac{X^{1/2}}{\log(X)^{1-\frac{1}{2}\frac{|S|}{[K_3(E[2]):K_3]}}}$$
as $X \to \infty$,
where $C$ is some positive constant,  $[-~, N/K_3]$ is the global Artin symbol and $\nrm$ is the ideal norm.
\end{lemma}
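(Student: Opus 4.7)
The plan is to apply a Selberg--Delange type Tauberian theorem to a Dirichlet series supported on the allowed prime ideals. First I would reformulate the condition that $\nrm\mathfrak{p}$ is a square: since $K_3/\Q$ is cubic, this forces $\nrm\mathfrak{p}=p^2$ for $p=\mathfrak{p}\cap\Z$, meaning $\mathfrak{p}$ has residue degree $2$. Such a $\mathfrak{p}$ exists (and is then unique) above $p$ precisely when $p$ is unramified in $\tilde K_3$ with $\mathrm{Frob}_p$ a transposition in $\Gal(\tilde K_3/\Q)\simeq S_3$, and these primes have density $1/2$. Conditional on this, the per-prime condition $\mathrm{Frob}_\mathfrak{p}(K_3(E[2])/K_3)\in S$ translates, after a group-theoretic computation inside the Galois closure $\tilde M := \tilde K_3\cdot \Q(E[2])$ of $K_3(E[2])/\Q$, into a Chebotarev condition on $\mathrm{Frob}_p^{2}$ in $\Gal(\tilde M/\Q)$. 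A direct count then shows the density of allowed rational primes is $\delta_P=\tfrac{|S|}{2[K_3(E[2]):K_3]}$, which evaluates to $1/6$ and $1/3$ in the two subcases of the lemma, matching the exponents $5/6$ and $2/3$ of $\log X$ in the claimed denominator.

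Next I would handle the global Artin-symbol condition $[\mathfrak{b},N/K_3]=1$ via Fourier analysis on the finite abelian group $A:=\Gal(N/K_3)$. Writing
\[
\mathbf{1}_{[\mathfrak{b},N/K_3]=1} \;=\; \frac{1}{|A|}\sum_{\chi\in\widehat{A}} \chi\bigl([\mathfrak{b},N/K_3]\bigr),
\]
the target count decomposes as $\tfrac{1}{|A|}\sum_\chi M_\chi(X)$, where $M_\chi(X)$ counts ideals $\mathfrak{b}$ supported on the set $T$ of primes of $K_3$ satisfying the first three bulleted conditions, weighted by $\chi(\mathfrak{b})$. For each $\chi$, $M_\chi(X)$ is the partial coefficient sum of the Euler product
\[
D_\chi(s) \;=\; \prod_{\mathfrak{p}\in T}\bigl(1-\chi(\mathfrak{p})\nrm\mathfrak{p}^{-s}\bigr)^{-1} \;=\; \prod_{p\in P}\bigl(1-\chi(\mathfrak{p}(p))\,p^{-2s}\bigr)^{-1}.
\]
For the trivial character $\chi_0$, the Chebotarev count of the first step gives $\log D_{\chi_0}(s)=\delta_P\log\tfrac{1}{2s-1}+O(1)$ as $s\to(1/2)^+$, so $D_{\chi_0}$ has a branch singularity of strength $(s-1/2)^{-\delta_P}$ at $s=1/2$.

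The Selberg--Delange theorem, applied after the substitution $u=2s$ that converts the problem into one about a Dirichlet series with algebraic singularity at $u=1$, then produces
\[
M_{\chi_0}(X) \;\sim\; c_0\cdot \frac{X^{1/2}}{\log(X)^{1-\delta_P}}
\]
for a positive constant $c_0$ depending on the Euler factors away from the singularity. For nontrivial $\chi\in\widehat{A}$, the non-vanishing and absence of a pole at $s=1/2$ of the associated Hecke $L$-function $L(s,\chi)$, combined with character orthogonality applied to the Chebotarev classes defining $T$, shows that $\log D_\chi(s)$ stays bounded as $s\to(1/2)^+$, so those characters contribute strictly lower-order terms. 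Summing over $\chi$ and dividing by $|A|$ then yields the claimed asymptotic with $C=c_0/|A|>0$.

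The main obstacle is the density computation in the first step together with the verification that nontrivial $\chi$ give lower-order contributions when $K_3(E[2])$ and $N$ are not linearly disjoint over $K_3$. Both points are handled by working inside the Galois closure of $K_3(E[2])\cdot N$ over $\Q$ and expanding the indicator of $T$ in terms of Chebotarev classes there; the hypothesis $K_3\not\subset\Q(E[2])$ and the $S_3$-structure of $\Gal(\Q(E[2])/\Q)$ coming from the ambient setup control the intersection $\tilde K_3\cap \Q(E[2])$ and guarantee that $\delta_P>0$ in each case of interest.
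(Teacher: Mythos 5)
Your proposal is correct and follows essentially the same route as the paper: identify the square-norm primes as the degree-two primes over rational primes whose Frobenius is a transposition in $\Gal(\tilde K_3/\Q)$, impose the Artin-symbol condition by averaging over characters of $\Gal(N/K_3)$, extract the singularity exponent $\tfrac{1}{2}\tfrac{|S|}{[K_3(E[2]):K_3]}$ by Chebotarev, and conclude with a Tauberian theorem. The only cosmetic difference is that you keep the Dirichlet series in $\nrm\mathfrak{b}^{-s}$ and rescale $u=2s$ to place the singularity at $u=1$, whereas the paper first passes to the degree-one companion primes (so that $\nrm\mathfrak{b}=(\nrm\mathfrak{a})^2$) and applies Wintner's theorem at $s=1$; these are equivalent.
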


\begin{proof}
An unramified, non-inert rational prime $p$ can split as a product of primes in two ways in the ring of integers, $\mathcal{O}_{K_3}$, of  the cubic field $K_3$. Either $p\scr{O}_{K_3}=\mathfrak{p}_1\mathfrak{p}_2\mathfrak{p}_3$ where each factor has degree one, or $p\scr{O}_{K_3}=\mathfrak{p}_1\mathfrak{p}_2$ where one factor has degree one and one factor has degree two. Primes of degree two only appear as factors in the latter splitting type. 

First we will count rational primes $p$ such that $p\scr{O}_{K_3}=\mathfrak{p}\mathfrak{q}$ where the residue degrees of the prime factors are $f(\mathfrak{p}| p)=1$ and $f(\mathfrak{q}| p)=2$. For each such $p < X$ we get one prime $\mathfrak q$ of $K_3$ of square norm such that $\nrm\mathfrak{q} < X^2$. Let $\scr{S}_{(1,2)}$ be the set of such rational primes, i.e., 
\begin{equation}
\scr{S}_{(1,2)} \coloneqq \{ p \in \mathbb{N} ~ \text{prime} \mid p\scr{O}_{K_3}=\mathfrak{p}\mathfrak{q}, f(\mathfrak{p}| p)=1, f(\mathfrak{q}| p)=2 \}
\end{equation}
Also set
\begin{equation}
\scr{P}_{(1)} \coloneqq \{\mathfrak{p} \subset \mathcal{O}_{K_3} \text{ prime ideal}  \mid  (\mathfrak{p} \cap \mathbb{Z})\scr{O}_{K_3}=\mathfrak{p}\mathfrak{q}, f(\mathfrak{p}|\mathfrak{p} \cap \mathbb{Z})=1, f(\mathfrak{q} | \mathfrak{p}\cap \mathbb{Z})=2 \}
\end{equation}
and 
\begin{equation}
\scr{Q}_{(2)} \coloneqq \{\mathfrak{q} \subset \mathcal{O}_{K_3} \text{ prime ideal}  \mid f(\mathfrak{q}\mid \mathfrak{q}\cap \mathbb{Z}) =2 \}.
\end{equation}
Likewise, define
\begin{itemize}
 \item $\scr{S}_{(1,2)}(X) \coloneqq \{p \in \scr{S}_{(1,2)} \mid p <X \}$;
 \item $\scr{P}_{(1)}(X) \coloneqq \{\mathfrak p \in \scr{P}_{(1)} \mid \nrm \mathfrak p < X\}$;
 \item $\scr{Q}_{(2)}(X) \coloneqq \{\mathfrak q \in \scr{Q}_{(2)} \mid \nrm \mathfrak q < X\}$. 
 \end{itemize}
 With this notation, the discussion above amounts to 
\begin{equation}
\# \scr{S}_{(1,2)}(X) = \#\scr{P}_{(1)}(X) = \#\scr{Q}_{(2)}(X^2).
\end{equation}

A rational prime $p \in \scr{S}_{(1,2)}$ if and only if $\text{Frob}_p(\tilde K_3/\mathbb{Q})$ acts on the three cosets of $\text{Gal}(\tilde K_3/\mathbb{Q})/\text{Gal}(\tilde K_3/K_3)$ like a transposition. Via the Chebotarev density theorem, this happens with probability $\#\{\mathrm{transpositions ~in ~} S_3\}/\#S_3 =1/2$. That is, the density of $\scr{S}_{(1,2)}$ in the set of all rational primes is $1/2$.

We can conclude the Dirichlet density, $\delta_{dir}$, of the set of primes $\mathfrak{p}$ in $K_3$ corresponding to each $p \in \scr{S}_{(1,2)}$ is also 1/2. i.e.
\begin{equation}
\delta_{dir}\left(\{\mathfrak{p} \mid f(\mathfrak{p}| \mathfrak{p} \cap \mathbb{Z})=1, (\mathfrak{p} \cap \mathbb{Z})\scr{O}_{K_3}=\mathfrak{p}\mathfrak{q}, f(\mathfrak{q} | \mathfrak{p}\cap \mathbb{Z})=2 \} \right) = \delta_{dir}(\scr{P}_{(1)}) = 1/2.
\end{equation}
Now define some notation. Set $M = K_3(E[2])$ and recall that $S$ is the set of all elements of order 3 in $\text{Gal}(M/K_3)$ and note that $S$ is a union of conjugacy classes when $\text{Gal}(M/K_3) = C_3$ and is a conjugacy class when $\text{Gal}(M/K_3)=S_3$. Now, set

\begin{itemize}
\item $\scr{P} =  \{\mathfrak{p} \in \scr{P}_{(1)} \mid \mathfrak{p} \text{ unramified in } NM/K_3, \text{Frob}_{\mathfrak{p}}(M/K_3) \subset S\}$;

\item $\scr{Q} =  \{\mathfrak{q} \in \scr{Q}_{(2)} \mid \mathfrak{p} \text{ unramified in } NM/K_3, \text{Frob}_{\mathfrak{q}}(M/K_3)\subset S \}$; 

\item $\scr{N} = \{\mathfrak{a} \mid \text{square-free product of ideals from } \scr{P}\}$;

\item $\scr{N}_1 = \{\mathfrak{a} \mid \text{square-free product of ideals from } \scr{P}, [\mathfrak{a}, N/K_3]=1\}$;

\item $\scr{R}_1 = \{\mathfrak{b} \mid \text{square-free product of ideals from } \scr{Q}, [\mathfrak{b}, N/K_3]=1\}$.
\end{itemize}
Our goal is now to access the number of ideals in $\scr{N}_1(X)$ via the Dirichlet series $\sum_{\mathfrak{a} \in \scr{N}_1}N\mathfrak{a}^{-1}$ and a Tauberian theorem of Wintner. Indeed, we'll see knowing $\scr{N}_1(X)$ suffices to understand $\scr{R}_1(X^2)$.

To that end, for an irreducible character $\chi: \text{Gal}(N/K_3) \rightarrow \mathbb{C}^\times$ where we will write $\chi(\mathfrak{a})$ for $\chi([\mathfrak{a}, N/K_3])$, set 
\begin{equation}\label{def:fchi}
f_\chi(s) := \sum_{\mathfrak{a} \in \scr{N}} \chi(\mathfrak{a})N\mathfrak{a}^{-s} = \prod_{\mathfrak{p} \in \scr{P}} (1 + \chi(\mathfrak{p})N\mathfrak{p}^{-s}).
\end{equation}

Note that $\mathfrak{p} \in \scr{P}$ can't be above a rational prime $p$ which splits completely in $\tilde K_3$; if it split completely in $\tilde K_3$, then it splits completely in $K_3$, too. Thus $\mathrm{Frob}_{\mathfrak p}(\tilde K_3/K_3)$ isn't trivial.
%If it acted trivially, then $p$ would split completely in $\tilde K_3$.

Let $\tau$ be the non-trivial element of $\mathrm{Gal}(\tilde K_3/K_3)$ and set  
$$S' = \{\tau\}\times S \subset \mathrm{Gal}(\tilde K_3 M /K_3) = \mathrm{Gal}(\tilde K_3/K_3) \times \mathrm{Gal}(M/K_3)$$
and 
$$\delta(S,\chi) = \begin{cases}
0 \text{ if } \chi \text{ non-trivial} \\
\frac{1}{2}\frac{|S|}{[M:K_3]} \text{ if } \chi \text{ trivial}
\end{cases}$$
noting that, in the $\chi$ trivial case, 
$$
\frac{1}{2}\frac{|S|}{[M:K_3]} = \#S' / \#\mathrm{Gal}(\tilde K_3 M/K_3).
$$
We write $g_1(s) \sim g_2(s)$ for two complex functions $g_1,g_2$ on the half plane $\Re s > 1$ if $g_1(s) - g_2(s)$ extends to a holomorphic function on the half plane $\Re s \geq 1$. Now, starting from the logarithm of \eqref{def:fchi} and using the Chebotarev density theorem, we have
\begin{align*}
\log f_\chi(s) &\sim \sum_{\mathfrak{p} \in \scr{P}}\chi(\mathfrak{p})N\mathfrak{p}^{-s} \sim \delta(S,\chi) \sum_{\substack{\mathfrak{p} \text{ prime}}}\chi(\mathfrak{p})N\mathfrak{p}^{-s} \sim \delta(S, \chi) \log\left(\frac{1}{s-1}\right).
\end{align*}

Using character orthogonality, observe
\begin{equation}\label{eq:poles}
\frac{1}{[N:K_3]}\sum_{\chi}f_\chi(s) = \frac{1}{[N:K_3]}\sum_{\mathfrak{a} \in \scr{N}} N\mathfrak{a}^{-s}\sum_{\chi}\chi(\mathfrak{a}) = \sum_{\mathfrak{a} \in \scr{N}_1} N\mathfrak{a}^{-s} = %(s-1)^{1-\frac{1}{2}\frac{|S|}{[M:K_3]}}h(s)
(s-1)^{-\frac{1}{2}\frac{|S|}{[M:K_3]}}h(s)
\end{equation}
where the first two sums range over irreducible characters $\chi$ of $\text{Gal}(N/K_3)$, and where $h(s)$ is a non-zero, holomorphic function for $\Re s \geq 1$.

Applying a Tauberian theorem of Wintner \cite{Wintner} to \eqref{eq:poles}, we obtain 

\begin{equation}\label{eq:sizeofN1}
\#\scr{N}_1(X) = (C+o(1))\frac{X}{\log(X)^{1-\frac{1}{2}\frac{|S|}{[M:K_3]}}}.
\end{equation}

Now, if $\mathfrak{a} \in \scr{N}_1$ we have, for some positive integer $m$, $\mathfrak{a} = \prod_{i=1}^m \mathfrak{p}_i$,  where $\mathfrak{p}_i \in \scr{P}$. For each rational prime $p_i$ below $\mathfrak{p}_i$, we have $p_i\scr{O}_{K_3} = \mathfrak{p}_i\mathfrak{q}_i$ where $\mathfrak{q}_i \in \scr{Q}$. Set $\mathfrak{b}=\prod_{i=1}^m \mathfrak{q}_i$.

First, we'll show $\text{Frob}_{\mathfrak{q_i}}(M/K_3) \subset S$. If $E: y^2=f(T)$, consider the %unique 
cubic extension $L=K_3(T)/(f(T))$, where $f(T) \in \mathbb{Q}[x]$ is some cubic polynomial, between $M$ and $K_3$. If $\text{Gal}(M/K_3) = C_3$, then $M=L$. We can look at how $f(T)$ factors modulo $\mathfrak{p}_i$ and $\mathfrak{q}_i$. The only way for $f(T)$ to be irreducible modulo $\mathfrak{q}_i$ (i.e. over $\mathbb{F}_{p_i^2})$ is for $f(T)$ to be irreducible modulo $\mathfrak{p}_i$; this  happens precisely when $\text{Frob}_{\mathfrak{p}_i}(M/K_3) \subset S$. If $f(T) \pmod{\mathfrak{q}_i}$ is irreducible, $\text{Frob}_{\mathfrak{q}_i}(M/K_3)$ has order 3.  That is, demanding $\text{Frob}_{\mathfrak{p}_i}(M/K_3) \subset S$ forces $\text{Frob}_{\mathfrak{q}_i}(M/K_3) \subset S$.  

Second, since each $\mathfrak{p}_i\mathfrak{q}_i$ is principle, knowing $[\mathfrak{a}, N/K_3]=1$ suffices to show $[\mathfrak{b}, N/K_3]=1$, too. Thus, $\mathfrak{b} \in \scr{R}_1$ 

Finally, since $N\mathfrak{b} = (N\mathfrak{a})^2$, we have established a bijection between $\scr{N}_1(X)$ and $\scr{R}_1(X^{1/2})$ by mapping $\mathfrak{a} \mapsto \mathfrak{b}$.

This and \eqref{eq:sizeofN1} give us 
$$\#\scr{R}_1(X) \sim (C+o(1))\frac{X^{1/2}}{\log(X)^{1-\frac{1}{2}\frac{|S|}{[M:K_3]}}}$$
for some positive constant $C$, as needed.
\end{proof}

We now state and prove the analogue of Lemma \ref{lem:manyideals}
in the case that $K_3$ is an admissible $C_3$-cubic resolvent. 

\begin{lemma}\label{lem:manyidealsA4}
Suppose $E/\mathbb{Q} $ is an elliptic curve where $K_3$ is an admissible $C_3$-cubic resolvent for $E$ and  $\mathrm{Gal}(K_3(E[2]/K_3)$ is $C_3$ or $S_3$.  Let $S$ be the set of the elements of order 3 in $\text{Gal}(\mathbb{Q}(E[2])/\mathbb{Q})$, and $N$ be an abelian extension of $K_3$. Then, the number of ideals $\mathfrak{b}$ of $K_3$ such that 
\begin{itemize}
	\item $\nrm\mathfrak{b} < X$, $\nrm\mathfrak{b}$  is a square, and $[\mathfrak{b}, N/K_3]=1$; 
    %\item $\nrm\mathfrak{b} < X$,
    %\item $\nrm\mathfrak{b}$  is a square,
    \item and for every prime ideal $\mathfrak{p}$ dividing $ \mathfrak{b}$,   $\mathrm{Frob}_\mathfrak{p}(K_3(E[2])/K_3) \subset S$ 
   % \item $[\mathfrak{b}, N/K_3]=1$
\end{itemize}
is asymptotic to 
$$(D+o(1))\frac{X^{1/2}}{\log(X)^{1-\frac{1}{2}\frac{|S|}{[K_3(E[2]):\mathbb{Q}]}}}$$
for some real, positive constant $D$, and where %$[-~, N / \mathbb{Q}]$
$[-~, N / K_3]$ is the global Artin symbol and $\nrm$ is the ideal norm.
\end{lemma}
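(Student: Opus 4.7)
The plan is to mirror the proof of Lemma \ref{lem:manyideals}, with the key modification that when $K_3/\mathbb{Q}$ is Galois cyclic of degree three, there are no primes of $K_3$ of residue degree two. Every unramified rational prime is either totally split in $K_3$ (giving $p\mathcal{O}_{K_3}=\mathfrak{p}_1\mathfrak{p}_2\mathfrak{p}_3$) or inert (a single prime of norm $p^3$), so the squarefree, square-norm ideals $\mathfrak{b}$ we are after must decompose as products of ``pair-ideals'' $\mathfrak{p}_i\mathfrak{p}_j$ ($i\neq j$) above distinct totally split rational primes, each pair-ideal having norm $p^2$.

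I would first use admissibility to observe that $K_3\not\subset\mathbb{Q}(E[2])$, together with the fact that $K_3$ is a Galois cubic with no nontrivial proper subfield, forces $K_3\cap\mathbb{Q}(E[2])=\mathbb{Q}$. This gives a canonical identification $\mathrm{Gal}(K_3(E[2])/K_3)\simeq\mathrm{Gal}(\mathbb{Q}(E[2])/\mathbb{Q})$ sending $\mathrm{Frob}_\mathfrak{p}(K_3(E[2])/K_3)$ to $\mathrm{Frob}_p(\mathbb{Q}(E[2])/\mathbb{Q})$ for any degree-one prime $\mathfrak{p}$ of $K_3$ above $p$. Applying Chebotarev to the Galois extension $K_3\cdot\mathbb{Q}(E[2])/\mathbb{Q}$ then identifies the relevant set of rational primes $\mathcal{P}'$ (those that split completely in $K_3$, are unramified in $NK_3(E[2])/\mathbb{Q}$, and satisfy $\mathrm{Frob}_p(\mathbb{Q}(E[2])/\mathbb{Q})\subset S$) as a set of positive Dirichlet density, with each $p\in\mathcal{P}'$ contributing three norm-$p^2$ pair-ideals in $K_3$.

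The Dirichlet-series machinery from the proof of Lemma \ref{lem:manyideals} then carries over: for each character $\chi$ of $\mathrm{Gal}(N/K_3)$, form
\[
f_\chi(s)=\sum_{\mathfrak{b}\in\mathcal{R}}\chi(\mathfrak{b})N\mathfrak{b}^{-s}=\prod_{p\in\mathcal{P}'}\Bigl(1+p^{-2s}\!\!\sum_{1\le i<j\le 3}\chi(\mathfrak{p}_{p,i})\chi(\mathfrak{p}_{p,j})\Bigr),
\]
where $\mathcal{R}$ denotes the desired square-norm, squarefree ideals supported on primes above $\mathcal{P}'$. Character orthogonality on $\mathrm{Gal}(N/K_3)$ isolates those $\mathfrak{b}$ with $[\mathfrak{b},N/K_3]=1$, and Wintner's Tauberian theorem---applied after the substitution $u=2s$ exactly as in \eqref{eq:poles}---converts the order of the pole of $\sum_\chi f_\chi$ at $s=1/2$ into the claimed asymptotic. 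The $X^{1/2}$ scale is forced by the identity $N\mathfrak{b}<X\iff\prod_i p_i<X^{1/2}$, the analogue of the bijection $N\mathfrak{b}=(N\mathfrak{a})^2$ used in the $S_3$ case.

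The main obstacle is the analysis of non-trivial $\chi$. In Lemma \ref{lem:manyideals} each prime of $\mathcal{P}$ sits in an independent Euler factor $(1+\chi(\mathfrak{p})N\mathfrak{p}^{-s})$, so non-trivial characters drop out of the main term by a direct Chebotarev argument. In the present setting, each rational prime $p\in\mathcal{P}'$ produces three correlated local factors, and the inner sum $\sum_{i<j}\chi(\mathfrak{p}_{p,i})\chi(\mathfrak{p}_{p,j})$ need not vanish pointwise. To handle this I would pass to the Galois closure $\tilde N/\mathbb{Q}$, on which the generator $\tau\in\mathrm{Gal}(K_3/\mathbb{Q})$ acts on $\mathrm{Gal}(\tilde N/K_3)$ by conjugation and cyclically permutes the three Artin symbols $[\mathfrak{p}_{p,i},\tilde N/K_3]$. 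Chebotarev applied to the compositum $\tilde N\cdot K_3(E[2])/\mathbb{Q}$ then averages the inner sum and exhibits a strictly smaller contribution for non-trivial $\chi$, so the asymptotic is governed by the trivial character alone.
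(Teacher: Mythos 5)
Your route is genuinely different from the paper's and viable in outline, but as written it does not land on the exponent the lemma states, and you have not noticed the discrepancy. The paper never forms a Dirichlet series over ideals of $K_3$: it counts squarefree rational integers $a$ supported on the primes $p$ with $\mathrm{Frob}_p(K_3(E[2])/\mathbb{Q})\subset\{1\}\times S$ and with trivial Artin symbol in the normal closure $\tilde N/\mathbb{Q}$, gets $\#\mathcal{A}_1(Y)\sim CY/(\log Y)^{1-\delta}$ with $\delta=|S|/[K_3(E[2]):\mathbb{Q}]$ by the same orthogonality-plus-Wintner argument as in Lemma~\ref{lem:manyideals}, and then sends each $a=\prod p_i$ to a \emph{single} ideal $\mathfrak{b}=\prod\mathfrak{p}_i\mathfrak{p}_i'$ by choosing one pair of primes above each $p_i$. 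That injection gives the stated quantity as a lower bound (which is all Proposition~\ref{prop:manytwists} needs) and sidesteps your ``main obstacle'' entirely, since over $\mathbb{Q}$ each rational prime sits in one independent Euler factor. Your Euler product instead counts all three pair-ideals above each $p\in\mathcal{P}'$: for the trivial character the local factor is $1+3p^{-2s}$, so $\log f_{\mathbf 1}(s)\sim 3\delta\log\frac{1}{2s-1}$ and the Tauberian step yields $\asymp X^{1/2}/(\log X)^{1-3\delta}$, not $X^{1/2}/(\log X)^{1-\delta}$. This is a larger count (arguably the honest asymptotic for the set the lemma actually describes, since the paper's map omits two of the three pairs per prime), and it is still $\gg$ the stated bound, so nothing downstream breaks; but your assertion that the pole order ``converts into the claimed asymptotic'' is false as written. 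Either flag that you are proving a stronger count, or restrict to one chosen pair per prime to match the paper.

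Separately, your treatment of non-trivial characters is the right idea but the stated conclusion is too strong. After averaging by Chebotarev in $\tilde N\cdot K_3(E[2])/\mathbb{Q}$, the coefficient $c_\chi$ of $\log\frac{1}{2s-1}$ satisfies $|c_\chi|\le 3\delta$, with equality $c_\chi=3\delta$ precisely when $\chi$ kills every class $[\mathfrak{p}_{p,i}\mathfrak{p}_{p,j},N/K_3]$; such characters can exist and then contribute at the \emph{same} order as the trivial character, not a strictly smaller one. They contribute with positive leading coefficient, so the asymptotic survives with a modified constant, but an argument claiming an asymptotic (rather than a lower bound) must address this and rule out cancellation among the leading terms. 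If a lower bound suffices, you can avoid the issue entirely by restricting, as the paper does, to rational primes that split completely in $\tilde N$.
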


\begin{proof}
Since $K_3$ is an admissible $C_3$-cubic resolvent, we have that $K_3$ and $\mathbb{Q}(E[2])$ are linearly disjoint. %In particular, $\mathrm{Gal}(K_3(E[2])/\mathbb{Q}) = C_3 \times S_3$. 
Setting $M = K_3(E[2])$, we have $\mathrm{Gal}(M/\mathbb{Q}) = C_3 \times S_3$. 
First, some notation. Define the following sets, 
\begin{itemize}
    \item $\mathcal{P_\mathbb{Q}} = \{p \in \mathbb{N} ~\text{prime} \mid \mathrm{Frob}_p(M/\mathbb{Q})\subset \{1\} \times S, p ~\text{unramified in}~ \tilde N K_3 \mathbb{Q}(E[2])\}$;
    \item $\mathcal{A} = \{a \mid a ~\text{a squarefree product of }~ p \in \mathcal{P}_\mathbb{Q}\}$;
    \item $\mathcal{A}_1 = \{a \in \mathcal{A} \mid [(a), \tilde N/\mathbb{Q}]=1 \}$ %change: next line added
    where $[-~, \tilde N / \mathbb{Q}]$ is the global Artin symbol.
\end{itemize}

Let $\tilde N$ be the normal closure of $N$ over $\mathbb{Q}$. We will use the triviality of the Artin symbol $[-~, \tilde N/\mathbb{Q}]$ to obtain the triviality of the Artin symbol $[-~, N/\mathbb{Q}]$ as in the statement of the lemma. Now, for a character $\psi : \mathrm{Gal}(\tilde N/\mathbb{Q}) \rightarrow \mathbb{C}^\times$, and writing $\psi(a)$ for $\psi([(a), \tilde N/\mathbb{Q}])$, let 
\begin{equation*}
    f_\psi(s) \coloneqq \sum_{a \in \mathcal{A}}\psi(a)a^{-s}
\end{equation*}
For two functions $g_1, g_2$ defined on the complex half plane $\Re s >1$, write $g_1(s) \sim g_2(s)$ to mean $g_1(s)$ and $g_2(s)$ differ by a function which is holomorphic on $\Re s \geq 1$.  Taking log of the $f_\psi(s)$, substituting the Taylor series for $\log(1-x)$ and truncating the Taylor series after one term, one arrives at 
\begin{equation}\label{eq:logfpsi}
    \log f_\psi(s) = \sum_{p \in \mathcal{P}_\mathbb{Q}} \log\left(1 + \psi(p)p^{-s}\right) \sim \sum_{p \in \mathcal{P}_{\mathbb{Q}}} \psi(p)p^{-s}\sim \delta_\psi \log\left(\frac{1}{s-1}\right)
    \end{equation}
where, using the Chebotarev density theorem, 
$$
\delta_\psi = 
\begin{cases} 0 \hspace{1.6cm}\text{if $\psi$ is non-trivial}, \\ 
\frac{|S|}{[M:\mathbb{Q}]} ~\text{if $\psi$ is trivial}.
\end{cases}
$$

Now, using character orthogonality and summing over irreducible characters $\psi$ of $\mathrm{Gal}(\tilde N/\mathbb{Q})$, we have
\begin{equation}\label{eq:A1ortho}
\frac{1}{[\tilde N : \mathbb{Q}]} \sum_{\psi} f_\psi(s) =  \frac{1}{[\tilde N : \mathbb{Q}]} \sum_{a \in \mathcal{A}} a^{-s} \sum_{\psi} \psi(a) = \sum_{a \in \mathcal{A}_1}a^{-s}.
\end{equation}
But also, using \eqref{eq:logfpsi}, we have 
\begin{equation}\label{eq:expCheby}
    \frac{1}{[\tilde N : \mathbb{Q}]} \sum_{\psi} f_\psi(s) = g(s)(s-1)^{-|S|/[M:\mathbb{Q}]}
\end{equation}
where $g(s)$ is holomorphic and non-zero on $\Re s \geq 1$. We have then, via \eqref{eq:A1ortho} and \eqref{eq:expCheby}, we have 
$$ \sum_{a \in \mathcal{A}_1}a^{-s} =  g(s)(s-1)^{-|S|/[M:\mathbb{Q}]}.$$
Applying a Tauberian theorem of Wintner \cite{Wintner} yields 
$$\#\{a \in \mathcal{A}_1 \mid a < X \} = (C + o(1))\frac{X}{(\log X)^{1-|S|/[M:\mathbb{Q}]}}$$
for some positive, real $C$.

Now, suppose $a \in \mathcal{A}_1$ and $a = \prod_{i=1}^r p_i$, where the $p_i$ are distinct primes and $p_i \in \mathcal{P}_{\mathbb{Q}}$ and each $p_i$ splits completely in $\mathcal{O}_{K_3}$. Set $\mathfrak{a} = a\mathcal{O}_{K_3}$. %Then $\mathfrak a$ decomposes into prime ideals, with no two prime ideals factors above the same rational prime. Say $\mathfrak{a} = \prod_{i=1}^r \mathfrak{p}_i$, then for each $\mathfrak{p}_i$ above a prime $p_i$, pick another prime $\mathfrak{p}'_i$ of $K_3$ above $p_i$ (there are two choices), and set $\mathfrak{b} = \prod_{i=1}^r \mathfrak{p}_i\mathfrak{p}'_i$. Note $\nrm_{K_3/\mathbb{Q}}\mathfrak{b} = a^2$. 
Then $\mathfrak a$ decomposes into prime ideals as $\mathfrak{a} = \prod_{i=1}^r \mathfrak{p}_i\mathfrak{p}_i' \mathfrak{p}_i''$ where $\mathfrak{p}_i, \mathfrak{p}_i'$, and $\mathfrak{p}_i''$ are the three primes above $p_i$. For each each $\mathfrak{p}_i$ above a prime $p_i$, pick another prime $\mathfrak{p}'_i$ of $K_3$ above $p_i$ (there are two choices), and set $\mathfrak{b} = \prod_{i=1}^r \mathfrak{p}_i\mathfrak{p}'_i$. Note $\nrm_{K_3/\mathbb{Q}}\mathfrak{b} = a^2$. 

In this way, counting $a \in \mathcal{A}_1$ with $a < X$ gives a way of counting ideals $\mathfrak b$ in $K_3$ such that $\nrm_{K_3/\mathbb{Q}}\mathfrak b < X^{1/2}$ such that $\nrm_{K_3/\mathbb{Q}}\mathfrak b$ is a square and, for each prime $\mathfrak p$ dividing $\mathfrak{b}$, $\mathrm{Frob}_{\mathfrak p}(M/K_3) \subset S$ and $[\mathfrak{b}, N/K_3]=1$. The lemma follows.
\end{proof}

For an elliptic curve $E$ and each of the ideals enumerated in Lemma \ref{lem:manyideals}, there is a twist of $E$ in which the $2$-Selmer rank remains the same. 

\begin{lemma}\label{lem:sameranktwist}
Keeping the notation of Lemma \ref{lem:manyideals}, if $\mathfrak{b}$ is an ideal of $K_3$ such that 
\begin{itemize}
    \item $\nrm\mathfrak{b} < X$,
    \item If a prime ideal $\mathfrak{p}$ divides $\mathfrak{b}$, then $\nrm\mathfrak{p}$ is a square, 
    \item $\text{Frob}_\mathfrak{p}(K_3(E[2])/K_3) \subset S$, 
    \item and $[\mathfrak{b}, N/K_3]=1$,
\end{itemize}
then there is a quadratic extension $F/K_3$ of conductor $\mathfrak{b}$ such that $\dim_{\FF}\Sel(E^F/K_3) = \dim_{\FF}\Sel(E/K_3)$.
\end{lemma}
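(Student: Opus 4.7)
The plan is to construct a square norm quadratic extension $F=K_3(\sqrt{\beta})/K_3$ of conductor $\mathfrak{b}$ that satisfies all the splitting hypotheses of Lemma~\ref{MR3.3}, and then apply Corollary~\ref{MR3.4}(2) with $T=\emptyset$. The decisive input is the hypothesis $\mathrm{Frob}_\mathfrak{p}(K_3(E[2])/K_3)\subset S$ at each $\mathfrak{p}\mid\mathfrak{b}$: since $S$ consists of order-$3$ elements, $\mathrm{Frob}_\mathfrak{p}$ acts on $E[2]\cong\FF^2$ with characteristic polynomial $X^2+X+1$, so $\mathrm{Frob}_\mathfrak{p}-1$ is invertible on $E[2]$ and Lemma~\ref{lem:h1f} gives $E(K_{3,\mathfrak{p}})[2]=H^1_f(K_{3,\mathfrak{p}},E[2])=0$.

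First I would use the hypothesis $\mathrm{Cl}(K_3)[2]=0$: squaring is then an automorphism of the class group, so every ideal is principal modulo squares of ideals. Pick $\beta_0\in K_3^\times$ and an ideal $\mathfrak{c}$ with $(\beta_0)=\mathfrak{b}\mathfrak{c}^2$. Because $\mathfrak{b}$ is squarefree, $\beta_0$ has odd $\mathfrak{p}$-valuation for each $\mathfrak{p}\mid\mathfrak{b}$ and even valuation elsewhere, so $K_3(\sqrt{\beta_0})/K_3$ is ramified at exactly the primes dividing $\mathfrak{b}$ among primes not above $2$.

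The heart of the proof, and the main obstacle, is correcting $\beta_0\mapsto\beta=\beta_0 u$ with $u\in K_3^\times$ so that $\beta$ is a local square at every $\mathfrak{p}\mid 2$ (making $F$ unramified there, and hence of exact conductor $\mathfrak{b}$), at every place of additive reduction, at every multiplicative place with $\mathrm{ord}_v(\Delta_E)$ even, and at every real $v$ with $(\Delta_E)_v>0$ (forcing the splitting conditions of Lemma~\ref{MR3.3}), while additionally $\nrm_{K_3/\mathbb{Q}}(\beta)\in(\mathbb{Q}^\times)^2$. This is where the abelian extension $N$ in the lemma's hypothesis enters: I would take $N$ to be the abelian extension of $K_3$ that precisely cuts out this joint condition, namely a suitable ray class field modulo the product of the constrained finite places and the relevant archimedean ones, intersected with the kernel of the induced norm map to $\mathbb{Q}^\times/(\mathbb{Q}^\times)^2$ which detects the square norm condition. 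With that choice of $N$, the assumption $[\mathfrak{b},N/K_3]=1$ is exactly the statement that the class of $\mathfrak{b}$ in $\mathrm{Gal}(N/K_3)$ is trivial, and class field theory combined with weak approximation at the finitely many constrained places produces the required correcting $u$. I expect the technical work to lie in pinning down $N$ so that the Artin symbol hypothesis delivers the local squareness at the bad places and the global square-norm condition simultaneously.

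With $F=K_3(\sqrt{\beta})$ in hand, the ramification locus of $F/K_3$ is exactly the set of primes dividing $\mathfrak{b}$, and at each such $\mathfrak{p}$ the Frobenius argument in the first paragraph gives $E(K_{3,\mathfrak{p}})[2]=0$. Hence the set $T$ of Lemma~\ref{MR3.3} is empty, and Corollary~\ref{MR3.4}(2) yields $\dim_{\FF}\Sel(E^F/K_3)=\dim_{\FF}\Sel(E/K_3)$, completing the proof.
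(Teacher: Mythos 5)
Your overall strategy is the right one and is essentially the paper's: the paper disposes of this lemma by citing Proposition 4.2 of Mazur--Rubin with $N = K_3(8\Delta_E\infty)$, the ray class field of $K_3$ modulo $8\Delta_E$ and all archimedean places, and that proposition is proved just as you outline --- produce a generator of $\mathfrak{b}$ that is a local square at every place listed in Lemma \ref{MR3.3}, note that the order-$3$ Frobenius condition forces $E((K_3)_{\mathfrak p})[2]=0$ at every ramified prime so that $T=\emptyset$, and invoke Lemma \ref{MR3.4}(2). Your identification of $N$ as a suitable ray class field is exactly what the (otherwise underspecified) hypothesis $[\mathfrak{b},N/K_3]=1$ is meant to encode.

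The one step that would fail as written is your construction of $\beta$. Having chosen $\beta_0$ with $(\beta_0)=\mathfrak{b}\mathfrak{c}^2$, you correct it by a $u$ obtained from weak approximation at the finitely many constrained places; but weak approximation gives no control over the divisor of $u$ away from those places, so $(u)$ will in general have odd valuation at new primes, and $K_3(\sqrt{\beta_0 u})$ then acquires ramification outside $\mathfrak{b}$ --- ruining both the conductor claim and the applicability of Lemma \ref{MR3.3}, since the new ramified primes need not satisfy $E((K_3)_{\mathfrak p})[2]=0$. The repair is that no correction (and no use of $\mathrm{Cl}(K_3)[2]=0$) is needed: with $N=K_3(8\Delta_E\infty)$, the hypothesis $[\mathfrak{b},N/K_3]=1$ says precisely that $\mathfrak{b}$ is trivial in the ray class group modulo $8\Delta_E\infty$, i.e. $\mathfrak{b}=(\beta)$ for a totally positive $\beta\equiv 1\pmod{8\Delta_E}$. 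Such a $\beta$ is automatically a local square at every prime above $2$ (as $1+8\mathcal{O}_v\subset (K_v^\times)^2$), at every odd place dividing $\Delta_E$ (principal units are squares in odd residue characteristic), and at every real place; hence all splitting conditions of Lemma \ref{MR3.3} hold, the conductor of $K_3(\sqrt{\beta})/K_3$ is exactly $\mathfrak{b}$, and your concluding paragraph goes through. (The square-norm condition you worry about is then also automatic, since $\nrm_{K_3/\Q}(\beta)=\nrm\mathfrak{b}$ is a square, though it is not part of this lemma's conclusion.)
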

\begin{proof}
This is Proposition 4.2 of \cite{MR}, with $N=K_3(8\Delta_E\infty)$, the ray class field of $K_3$ modulo $8\Delta_E$ and all archimedean places of $K_3$, applied to the relevant ideals, which are a subset of the ideals discussed covered in that result.
\end{proof}

For an elliptic curve $E$ and each of the ideals enumerated in Lemma \ref{lem:manyidealsA4}, there is a twist of $E$ in which the $2$-Selmer rank remains the same. 

\begin{lemma}\label{lem:sameranktwistA4}
Keeping the notation of Lemma \ref{lem:manyidealsA4}, if $\mathfrak{b}$ is an ideal of $K_3$ such that 
\begin{itemize}
    \item $\nrm\mathfrak{b} < X$,
    \item If a prime ideal $\mathfrak{p}$ divides $\mathfrak{b}$, then $\nrm\mathfrak{p}$ is a square, 
    \item $\text{Frob}_\mathfrak{p}(K_3(E[2])/K_3) \subset S$, 
    \item and $[\mathfrak{b}, N/K_3]=1$,
\end{itemize}
then there is a quadratic extension $F/K_3$ of conductor $\mathfrak{b}$ such that $\dim_{\FF}\mathrm{Sel}_2(E^F/K_3) = \dim_{\FF}\mathrm{Sel}_2(E/K_3)$.
\end{lemma}
\begin{proof}
The proof is the same as that of Lemma \ref{lem:sameranktwist}.
\end{proof}

We are now ready to prove the main result of the section. We follow exactly the strategy of the proof of \cite{MR}*{Theorem 1.4} with the additional step of keeping track of the square norm condition of the involved quadratic twists. 

\begin{proof}[Proof of Proposition \ref{prop:manytwists}]
As in  Lemmas \ref{lem:manyideals} and \ref{lem:manyidealsA4}, let $S$ be the set of order 3 elements in $\Gal(K_3(E[2])/K_3)$. Then if $\Gal(K_3(E[2])/K_3) \simeq S_3$, 
$$\frac{|S|}{[K_3(E[2]) : K_3]} = 1/3,$$
and if  $\Gal(K_3(E[2])/K_3) \simeq \Z/3\Z$,
$$\frac{|S|}{[K_3(E[2]) : K_3]} = 2/3.$$
We'll consider the case that $K_3$ is an admissible $S_3$-cubic resolvent; there are two sub-cases to consider:
\begin{enumerate}
    \item Suppose $\dim_{\FF}\Sel(E/K_3)=r$. By Lemma \ref{lem:sameranktwist} and Lemma \ref{lem:manyideals}, the number of square norm twists $E^F/K_3$ such that $\dim_{\FF}\Sel(E^F/K_3)=r$ is 
    $$\gg X^{1/2}/\log{X}^{1-\frac{1}{2}\frac{|S|}{[K_3(E[2]):K_3]}}.$$
    \item Suppose $\dim_{\FF}\Sel(E/K_3) \neq r$. We have assumed there is a square norm twist $E^L/K_3$ such that $\dim_{\FF}\Sel(E^L/K_3) = r$. Note that a square norm twist of a square norm twist results in the square norm twist. That is, a is a square norm twist  $(E^L)^{F'}$ of $E^L$ is itself a square norm twist $E^F$ of $E$. Now the result follows from Case (1) applied to $E^L$. 
\end{enumerate}
If instead $K_3$ is an admissible $C_3$-cubic resolvent for $E$, the proof is the same as above, but with Lemmas \ref{lem:manyidealsA4} and \ref{lem:sameranktwistA4} in place of Lemmas \ref{lem:manyideals} and \ref{lem:sameranktwist}, respectively. 
\end{proof}

\section{Decreasing the 2-Selmer Rank}\label{section:rankdecrease}
Our strategy will be to use Lemma \ref{MR3.4}(2) to understand the $2$-Selmer rank of square norm twists. We'll then use Proposition \ref{prop:manytwists} to show there are many square norm twist with prescribed 2-Selmer rank assuming we \textit{have already} a square norm twist of that prescribed 2-Selmer rank. In this section, we'll show a square norm twist with that prescribed 2-Selmer rank \textit{must exist} by showing we can take square norm twists that reduce the 2-Selmer rank by two; this is the content of Proposition \ref{lem:selrank-2}.  

Indeed, Proposition \ref{lem:selrank-2} below can be viewed as analogous to Proposition 5.1(iii) of \cite{MR}. Except, instead of decreasing the 2-Selmer rank by 1 via a quadratic twist obtained by controlling one local condition, we decrease the 2-Selmer rank by 2 via a square norm twist obtained by controlling two local conditions. Those two local conditions are obtained from two primes in the cubic resolvent above the same rational prime. 

\begin{proposition}\label{lem:selrank-2}
Let $E$ be an elliptic curve defined over $\mathbb{Q}$ and let $K_3$ be an admissible cubic resolvent for $E$. Suppose further that $\dim_{\FF}\Sel(E/\Q)=0$ and 
$\dim_{\mathbb{F}_2} \Sel (E/K_3) \geq 2$. 

Then there exists a square norm twist $E^F/K_3$ such that 
\[\dim_{\mathbb{F}_2} \mathrm{Sel}_2(E^F/K_3) = \dim_{\mathbb{F}_2} \mathrm{Sel}_2(E/K_3) - 2.\]
\end{proposition}

\begin{proof}
Let $\Delta_E$ be the discriminant of (a minimal model of) $E$. For the admissible cubic resolvent, $K_3$, of $E$, set $M = K_3(E[2])$. Note that $M/K_3$ is a Galois $S_3$-extension, since $K_3$ is as in Definition \ref{def:admissibleK3}, and $K_3(\sqrt{\Delta_E})/K_3$ is an intermediate quadratic extension. Let $v_0$ be the distinguished place of $K_3$ guaranteed by Definition \ref{def:admissibleK3}.

Let $\Sigma$ be a finite set that contains the following places of $K_3$: all infinite places of $K_3$, places of bad reduction for $E$,  and all primes above 2. Now set $\mathfrak{d} = \prod_{v \in \Sigma \setminus \{v_0\}} v$.  Let $K_3(8\mathfrak{d})$ be the ray class field of $K_3$ with modulus $8\mathfrak{d}$ and let $K_3[8\mathfrak{d}]$ be the maximal extension between $K_3(8\mathfrak{d} )$ and $K_3$ whose degree is a power of 2.

Let $\tilde K_3$ be the Galois closure of $K_3$ over $\mathbb{Q}$ and $\mathfrak{D} = \prod_{\substack{V \mid v \\ v \mid \mathfrak{d}}} V$ be the product of places of $\tilde K_3$ above the places of $K_3$ dividing $\mathfrak{d}$. Let $L$ be the maximal extension\footnote{Note that if $K_3$ is $C_3$-cubic, then $L = K_3[8\mathfrak{d}]$ since $\tilde K_3 = K_3$.} of $\tilde K_3$ between $\tilde K_3$ and the ray class field of  $\tilde K_3$ with modulus $8\mathfrak{D}$. Note $L \supseteq K_3[8 \mathfrak d]$, $[L:K_3]$ is a power of 2, and  $L$ is Galois over $\mathbb{Q}$. 

By assumption, $\tilde K_3$ and $\mathbb{Q}(E[2])$ are linearly disjoint over $\mathbb{Q}$.  $K_3[8\mathfrak{d}]$ and $M$ are linearly disjoint as extensions of $K_3$ since $v_0$ is ramified in $K_3(\sqrt{\Delta_E})$ but not in $K_3[8\mathfrak{d}]$. By the conditions on $v_0$ of Definition \ref{def:admissibleK3}, $v_0$ does not ramify from $K_3$ to $\tilde K_3$, and hence is unramified in $L$. So likewise, the same consideration of $K_3(\sqrt{\Delta_E})$ shows $M$ and $L$ are linearly disjoint over $K_3$.

Let $\sigma$ be an element of the absolute Galois group of $K_3$ such that $\sigma|_M$ is a transposition in $\mathrm{Gal}(M/K_3) \simeq \mathrm{Aut}(E[2])$ and $\sigma|_L=1$. The former condition implies $E[2]/(\sigma -1)E[2] \simeq \mathbb{F}_2$. The latter condition implies $\sigma|_{K_3[8\mathfrak{d}]}=1$.
 
For the remainder of the proof, fix a nonzero map $\phi: \mathrm{Sel}_2(E/K_3) \to E[2]/(\sigma -1)E[2]$. By \cite{MR}*{Lemma 3.5}, there is an element $\gamma \in G_{K_3}$ for which $\gamma = \sigma$ when restricted to $MK_3[8\mathfrak{d}]$ and $c(\gamma) = \phi(c)$ for all $c \in \mathrm{Sel}_2(E/K_3)$. 

Let $N$ be a Galois extension of $\mathbb{Q}$ containing $M$ and $L$ for which the restriction of  $\mathrm{Sel}_2(E/K_3)$ to $N$ is zero. For instance, take $N$ to be the Galois closure (over $\mathbb{Q}$) of the compositum of $M$, $L$, and the fixed field of the kernel of the restriction to $\mathrm{Hom}(G_M, E[2])$ of every $c \in \mathrm{Sel}_2(E/K_3)$. 

Let $\mathcal{P}_\gamma$ be the set of primes $\mathfrak{p}$ of $K_3$ for which $\mathfrak{p} \notin \Sigma$ and $\mathrm{Frob}_{\mathfrak{p}}( N/K_3) = \gamma|_N$. By the Cebotarev Density Theorem, the natural density of $\mathcal{P}_\gamma$ among the primes of $K_3$ is positive. i.e.,
$$\delta_\gamma := \lim_{X \to \infty}\frac{\#\{\mathrm{primes}~\mathfrak{p}~\mathrm{of}~K_3 \mid \mathrm{Frob}_{\mathfrak{p}}( N/K_3)=\gamma|_N, \mathfrak{p} \notin \Sigma, \mathbf{N}\mathfrak p < X\} }{\#\{\mathrm{primes}~\mathfrak{p}~\mathrm{of}~K_3 \mid \mathbf{N}\mathfrak p < X\}} > 0.$$

Now, let $\mathcal{P}_{sp}$ be the set of primes of $K_3$ with inertia degree one over $\mathbb{Q}$, i.e. the primes $\mathfrak{p}$ for which the rational prime ideal $\mathfrak{p} \cap \mathbb{Z}$ splits completely in $K_3$. Recall that $\mathcal{P}_{sp}$ has natural density one among the primes of $K_3$. In particular, this means we can pick a prime $\mathfrak{p}_1 \in \mathcal{P}_\gamma \cap \mathcal{P}_{sp}$. If we could not, then $\mathcal{P}_\gamma$ (which has positive density) would be contained in the complement of $\mathcal{P}_{sp}$ (which has density zero).    Let $p$ be the rational prime below $\mathfrak{p}_1$, and let $\mathfrak{p}_2$ and $\mathfrak{p}_3$ be the other two primes of $K_3$ above $p$, i.e. $p \mathcal{O}_{K_3}= \mathfrak{p}_1\mathfrak{p}_2\mathfrak{p}_3$. 

Our goal is now to construct a suitable square-norm twist from two of $\mathfrak{p}_1, \mathfrak{p}_2, \mathfrak{p}_3$. We can  understand the 2-Selmer group of this twist using Lemma \ref{MR3.4}(1), which requires us to compute both $H_f^1((K_3)_{\mathfrak{p}_i}, E[2])$ and $\mathrm{loc}_{\mathfrak{p}_i} \mathrm{Sel}_2(E/K_3)$. 

 First, consider the localization at $\mathfrak{p}_1$.  Since $\mathrm{Frob}_{\mathfrak{p}_1} = \gamma$ when restricted to $N$ (and $\sigma = \gamma$ when restricted to $MK_3[8\mathfrak{d}]$) we have both that 
\begin{equation}\label{eq:dim1restricted}
H^1_f((K_3)_{\mathfrak{p}_1}, E[2]) \simeq E[2]/(\sigma -1)E[2] \simeq \mathbb{F}_2
\end{equation} 
and $\phi(c) = c(\gamma)$ for all $c \in \mathrm{Sel}_2(E/K_3)$. The localization map 
$$\mathrm{loc}_{\mathfrak{p}_1}:\mathrm{Sel}_2(E/K_3) \to H^1_f((K_3)_{\mathfrak{p}_1}, E[2]) \simeq E[2]/(\mathrm{Frob}_{\mathfrak{p}_1} - 1)E[2] \simeq E[2]/(\sigma - 1)E[2] \simeq \mathbb{F}_2$$ 
is given by evaluation of cocycles at $\mathrm{Frob}_{\mathfrak{p}_1}$, so we can identify $\mathrm{loc}_{\mathfrak{p}_1} (\mathrm{Sel}_2(E/K_3)) = \phi(\mathrm{Sel}_2(E/K_3)) $ as subspaces of $\mathbb{F}_2$. Since $\phi$ is nonzero, 
\begin{equation}\label{eq:dim1prime}
\dim_{\mathbb{F}_2} \mathrm{loc}_{\mathfrak{p}_1}(\mathrm{Sel}_2(E/K_3)) = 1.
\end{equation}

It remains to understand the localizations at $\mathfrak{p}_2$ and $\mathfrak{p}_3$. Since $p$ splits completely in $K_3$, there is an equality of local fields $(K_3)_{\mathfrak{p}_1}  = (K_3)_{\mathfrak{p}_2}  =(K_3)_{\mathfrak{p}_3}   = \mathbb{Q}_p$ and so, together with \eqref{eq:dim1restricted} we have 
\begin{equation}\label{eq:equalH1f} 
H^1_f((K_3)_{\mathfrak{p}_1}, E[2]) = H^1_f((K_3)_{\mathfrak{p}_2}, E[2]) = H^1_f((K_3)_{\mathfrak{p}_3}, E[2])  = H_f^1(\mathbb{Q}_p, E[2]) \simeq \mathbb{F}_2.
\end{equation}

Beginning from the following commutative diagram, we will consider the localization of $\mathrm{Sel}_2(E/K_3)$ at primes of $K_3$ above $p$ and localization at $p$, 
\begin{center}
    % https://tikzcd.yichuanshen.de/#N4Igdg9gJgpgziAXAbVABwnAlgFyxMJZABgBpiBdUkANwEMAbAVxiRAAkA9ARgAoBpAPoBmUgAIAosgBMFAJQgAvqXSZc+QimnkqtRizYAdQxDTM4g4MYC2dHAAsAZgCc6Aa2BpFY41jBiAFW8ubmMGGEccXgEROUEbOydXDy9xKVljZywAc3scBWVVbDwCIjJuXXpmVkQOHl4EhwAjJuAARWVJGXklFRAMYo0ibQrqKoNakIbDW2bWjsE0NO6C3RgobPgiUBcIayQyEBwIJG09aqMZxOdrYABjCGd4RRBqBjommAYABTUSzRAWVyOF6O2cewO1GOSG4Y30NRAjXsN2ADAgd0UggCrxA70+Pz+Q1q4UioJAu32iFhRxOiFE5wmiJMZiYcDEDyeCDeHy+v0GpVqQLyZIppyhtPp4xqYCYDAY3PxfPUAtxERBigoiiAA
\begin{tikzcd}
{H^1(K_3, E[2])} \arrow[d, "\mathrm{cores}_{K_3/\mathbb{Q}}"'] \arrow[rr, "\oplus_{i=1}^3 \mathrm{loc}_{\mathfrak{p}_i}"] &  & {\oplus_{i=1}^3 H^1\left((K_3)_{\mathfrak{p}_i}, E[2]\right)} \arrow[d, "\oplus_{i=1}^3 \mathrm{cores}_{(K_3)_{\mathfrak{p}_i}/\mathbb{Q}_p}"'] \\
{H^1(\mathbb{Q}, E[2])} \arrow[rr, "\mathrm{loc}_{\{p\}}"]                                         &  & {H^1(\mathbb{Q}_p, E[2])}                                                                        
\end{tikzcd}
\end{center}
%where $$\mathrm{cores}: H^1(K_3, E[2]) \rightarrow H^1(\mathbb{Q}, E[2])$$ is the corestriction map (see, e.g. \cite{SerreGalCo}*{Section I.2.4}) induced by the norm map $N_{K_3/\mathbb{Q}}: K_3 \rightarrow \mathbb{Q}$. 
where the vertical map on the left side, 
$$\mathrm{cores}_{K_3/\mathbb{Q}}: H^1(K_3, E[2]) \to H^1(\mathbb{Q}, E[2]),$$
 is determined by corestriction  on Galois cohomology induced by the norm map \break  $\mathbf{N}_{K_3/\mathbb{Q}}:K_3 \to \mathbb{Q}$ (see \cite{MilneCFT}*{Example 1.29} or \cite{SerreGalCo}). The vertical map on the right side is the sum of corestriction maps: 
 $$\oplus_{i=1}^3 \mathrm{cores}_{(K_3)_{\mathfrak{p}_i}/\mathbb{Q}_p}: \bigoplus_{i=1}^3 H^1\left((K_3)_{\mathfrak{p}_i}, E[2]\right) \to H^1(\mathbb{Q}_p, E[2]) \quad \text{by} \quad (c_1, c_2, c_3) \mapsto \sum_{i=1}^3  \mathrm{cores}_{(K_3)_{\mathfrak{p}_i}/\mathbb{Q}_p}(c_i) .$$
 
 Restricting the left column to the 2-Selmer groups of $E$ over $K_3$ and $\mathbb{Q}$ and to restricted cohomology on the right column, together with \eqref{eq:equalH1f}, we have
% https://tikzcd.yichuanshen.de/#N4Igdg9gJgpgziAXAbVABwnAlgFyxMJZABgBpiBdUkANwEMAbAVxiRAB12BbOnACwBOXYAGUYDAL4B9AEwAKAKIB6ANJSAzAEoQE0uky58hFGQCMVWoxZtOPfkNHjp85bd58ARh+ABFCZoBeYh09EAxsPAIiGXILemZWRA52DywAcwg0ZjgpYCwA0wkAPXUAAgAJKQAzItM5OTUtXLd+KoE6AGtgNGksXVKFZBkKbV19CKMidVjqeOskypq6ls9vPyk0UgGhkaLgTkzs0vUJUs5sLhgARzPudy9gADFnEpDxwyiUGPNZq0SQcq1apyFYPdabbbDTS3C7XW52VZPZw6CwwKBpeBEUBtCBcJBkEA4CBIUy-BI2O72YQAYwgAngEhA1AYdA84gACgZIsYQAJ0nwcG8QDi8YgCUSkDFLOSkgcskwcnkCsUyisHAwINTpPtKXw2p1ur1GWNhQJcSTqBLEAAWMnzZII9Wa6RoIUiyWW4k2u3-c5MLi5fKFEqlalSLAoiRAA
\begin{center}
% https://tikzcd.yichuanshen.de/#N4Igdg9gJgpgziAXAbVABwnAlgFyxMJZABgBpiBdUkANwEMAbAVxiRAB12BbOnACwBOXYAGUYDAL4B9AEwAKAKIB6ANJSAzAEoQE0uky58hFGQCMVWoxZtOPfkNHjp85bd58ARh+ABFCZoBeYh09EAxsPAIiGXILemZWRA52DywAcwg0ZjgpYCwA0wkAPXUAAgAJKQAzItM5OTUtXLd+KoE6AGtgNGksXVKFZBkKbV19CKMidVjqeOskypq6ls9vPyk0UgGhkaLgTkzs0vUJUs5sLhgARzPudy9gADFnEpDxwyiUGPNZq0SQcq1apyFYPdabbbDTS3C7XW52VZPZw6CwwKBpeBEUBtCBcJBkEA4CBIUy-BI2O72YQAYwgAngEhA1AYdA84gACgZIsYQAJ0nwcG8QDi8YgCUSkDFLOSkgcskwcnkCsUyisHAwINTpPtKXw2p1ur1GWNhQJcSTqBLEAAWMnzZII9Wa6RoIUiyWW4k2u3-c5MLi5fKFEqlalSLBus2iqVW6bS+3nLCXK6R82IONW23x-40KSmUoAalKuZkheLGhREiAA
\begin{tikzcd}
\mathrm{Sel}_2(E/K_3) \arrow[d, "\mathrm{cores}_{K_3/\mathbb{Q}}"'] \arrow[rr, "\oplus_{i=1}^3 \mathrm{loc}_{\mathfrak{p}_i}"] &  & {\bigoplus_{i=1}^3 H_f^1((K_3)_{\mathfrak{p}_i}, E[2])} \arrow[d, "\oplus_{i=1}^3 \mathrm{cores}_{(K_3)_{\mathfrak{p}_i}/\mathbb{Q}_p}"'] \arrow[r, "\simeq"] & {H_f^1(\mathbb{Q}_p, E[2])^{\oplus 3} \simeq \mathbb{F}_2^3} \arrow[ld, "v_1 + v_2 + v_3"] \\
\mathrm{Sel}_2(E/\mathbb{Q})=0 \arrow[rr, "\mathrm{loc}_p"]                                                   &  & {H^1_f(\mathbb{Q}_p, E[2]) \simeq \mathbb{F}_2}                                                           &                                                                                           
\end{tikzcd}
\end{center}

where the diagonal map $\mathbb{F}_2^3 \to \mathbb{F}_2$ on the right is coordinate-wise addition of vectors in $\mathbb{F}_2^3$ modulo 2. For $c \in \mathrm{Sel}_2(E/K_3)$ we have $\mathrm{loc}_p\mathrm{cores}(c) = 0$  since $\mathrm{Sel}_2(E/\mathbb{Q})=0$. Hence, 
\begin{equation}\label{eq:locrelation}
 \mathrm{loc}_{\mathfrak{p}_1}(c) + \mathrm{loc}_{\mathfrak{p}_2}(c)  + \mathrm{loc}_{\mathfrak{p}_3}(c)  = 0 \quad \text{in}~ \mathbb{F}_2.
 \end{equation}
 
By \eqref{eq:dim1prime} there is an element $c \in \mathrm{Sel}_2(E/K_3)$ for which $\mathrm{loc}_{\mathfrak{p}_1}(c)=1$ viewed in $\mathbb{F}_2$. %is nontrivial. 
 Combining this with  \eqref{eq:locrelation}, there is exactly one prime  $\mathfrak{p}_i \in \{\mathfrak{p}_2, \mathfrak{p}_3\}$ for which $\mathrm{loc}_{\mathfrak{p}_i}(c) = 1$; suppose, without loss of generality,  it is $\mathfrak{p}_2$. Whence, 
\begin{equation}\label{eq:locp2dim}
\dim_{\mathbb{F}_2}\mathrm{loc}_{\mathfrak{p}_2} \mathrm{Sel}_2(E/K_3) = 1.
\end{equation}

%\com{this builds the square norm twist, do it last?}
Finally, we will twist $E/K_3$ by a quadratic extension $F/K_3$ ramified only at $\mathfrak{p}_1$ and $\mathfrak{p}_2$ to get our desired result. 

Let $\mathfrak{P}$ be a prime of $L$ above $\mathfrak{p}_1$. Since $L/\mathbb{Q}$ is Galois, we have 
$$  \mathrm{Frob}_\mathfrak{P}(L/\mathbb{Q})^{f(\mathfrak{p}_1/p)} =  \mathrm{Frob}_\mathfrak{P}(L/\mathbb{Q}) = \mathrm{Frob}_\mathfrak{P}(L/K_3)  = 1 $$
i.e., $p$ splits completely in $L$, and so $p$ splits completely in $K_3[8\mathfrak{d}]$. %Consequently, $\mathrm{Frob}_{\mathfrak{p}_2}(K_3[8\mathfrak{d}]/K_3) =  1. $

Since our choice of Frobenius class for $p$ is trivial when restricted just to $K_3[8\mathfrak d]$, \break $\mathrm{Frob}_{\mathfrak{p}_1}(K_3[8\mathfrak{d}]/K_3) =  \mathrm{Frob}_{\mathfrak{p}_2}(K_3[8\mathfrak{d}]/K_3) =  1$, and since $[K_3(8\mathfrak d) : K_3[8\mathfrak d]]$ is odd, there will be an odd integer, say $h$, such that $(\mathfrak{p}_1 \mathfrak{p}_2)^h$ is principal with generator $\alpha$ such that $\alpha \equiv 1 \pmod{8 \mathfrak d}$ and $\alpha$ positive at all real embeddings except possibly $v_0$.  

We are now in a position to construct the quadratic extension $F/K_3$ by which we will twist $E$: define $F=K_3(\sqrt{\alpha})$.  Only the primes $\mathfrak{p}_1$ and $\mathfrak{p}_2$ of $K_3$ ramify in $F$.  Set $p(x) = x^2 - \alpha$ and note  $p'(1)^2=4$. For any $\mathfrak{q} \in \Sigma \setminus \{v_0\}$, since $\alpha \equiv 1 \pmod{8 \mathfrak d}$, we also have $p(1) = 1 - \alpha \equiv 0 \pmod{4\mathfrak{q}}$. From Hensel's lemma (see e.g. \cite{Eisenbud}*{Theorem 7.3} for an applicable statement), it follows $p(x)$ has a root in $(K_3)_{\mathfrak{q}}$. So $(K_3)_{\mathfrak{q}} \otimes F = (K_3)_{\mathfrak{q}}^2$, i.e. $\mathfrak{q}$ splits in $F$. Thus all primes in $\Sigma \setminus \{v_0\}$ split in $F$.

Note also that $\textbf{N}(\alpha) = \textbf{N}(\mathfrak{p}_1)\textbf{N}(\mathfrak{p}_2)=p^{2h}$, so the quadratic twist $E^F/K_3$ of $E/K_3$ is a square norm twist.

Finally, apply Lemma \ref{MR3.4}(1) with $T= \{\mathfrak{p}_1, \mathfrak{p}_2\}$, $F=K_3(\sqrt{\alpha})$ as above, \eqref{eq:dim1prime} and \eqref{eq:locp2dim}, we get
\begin{align*}
\dim_{\FF}\Sel(E^F/K_3) &= \dim_{\FF}\Sel(E/K_3) - 2 \dim_{\FF} V_T + \sum_{\mathfrak r \in T} \dim_{\FF} H^1_f((K_3)_\mathfrak{r}, E[2]) \\
&=  \dim_{\FF}\Sel(E/K_3) - 2\dim_{\mathbb{F}_2}\mathrm{loc}_{\{\mathfrak p_1\}}(  \mathrm{Sel}_2(E/K_3) ) - 2 \dim_{\mathbb{F}_2}\mathrm{loc}_{\{\mathfrak{p}_2\}}(  \mathrm{Sel}_2(E/K_3) )  \\
&\quad \quad +  \dim_{\FF} H^1_f((K_3)_{\mathfrak{p}_1}, E[2]) + \dim_{\FF} H^1_f((K_3)_{\mathfrak{p}_2}, E[2]) \\
%&=  \dim_{\FF}\Sel(E/K_3) - 2  - 2   +  1 + 1 \\
&= \dim_{\FF}\Sel(E/K_3) - 2
\end{align*}
Noting again the twist by $F$ above is a square norm twist, we have the desired result. 
\end{proof}

\section{Proofs of the Main Theorems}

We are now ready to prove Theorem \ref{thm:mainsel}. We'll then show how it implies Theorem \ref{thm:mainrank}. Again, the ``infinitely many'' of both theorems is quantified by Proposition \ref{prop:manytwists}.

\begin{proof}[Proof of Theorem \ref{thm:mainsel}]
Let $E$ and $K_3$ be as in Theorem \ref{thm:mainsel}. If $\dim_{\FF} \Sel (E/K_3) \equiv 0 \pmod{2}$, repeated application of Proposition \ref{lem:selrank-2} gives a square norm twist $L$ such that \newline $\dim_{\FF} \Sel(E^L/K_3)= 0$. Once we have $L$, Proposition \ref{prop:manytwists} provides infinitely many more square norm twists $F$ with $\dim_{\FF} \Sel(E^F/K_3)= 0$. 

Likewise, if $\dim_{\FF} \Sel (E/K_3) \equiv 1 \pmod{2}$, the argument above provides infinitely many more square norm twists $F$ with $\dim_{\FF} \Sel(E^F/K_3)= 1$. 
\end{proof}

Now we can prove  Theorem \ref{thm:mainrank} as a consequence of Theorem \ref{thm:mainsel}.

\begin{proof}[Proof of Theorem \ref{thm:mainrank}] 
Let $E$ and $K_3$ be as in the statement of the theorem. Theorem \ref{thm:mainrank} is essentially an immediate consequence of Theorem \ref{thm:mainsel} coupled with the upper bound the dimension of the 2-Selmer group provides for the rank. 

As in Theorem \ref{thm:mainsel}, there are two cases. In the first case, $\dim_{\mathbb{F}_2}\mathrm{Sel}_2(E/K_3)$ is even. In this case, Theorem \ref{thm:mainsel} provides infinitely many square norm twists $E^F/K_3$ for which $\dim_{\mathbb{F}_2}\mathrm{Sel}_2(E^F/K_3) = 0$. 

From \eqref{eq:rkbound}, we have $\mathrm{rk}(E/K_3) \leq \dim_{\FF}\Sel(E/K_3)$. Thus, our infinitely many square norm twists $E^F$  of 2-Selmer rank zero give us 
$$0= \dim_{\FF}\Sel(E^F/K_3) \geq \mathrm{rk}(E^F/K_3) = \mathrm{rk}(E/F) - \mathrm{rk}(E/K_3).$$
If $K/\mathbb{\Q}$ is the $S_4$-quartic corresponding to quadratic extension $F/K_3$ corresponding to each square norm twist, then having no rank growth from $K_3$ to $F$ means we have no rank growth from $\Q$ to $K$ for infinitely many $K$.

In the second case, $\dim_{\mathbb{F}_2}\mathrm{Sel}_2(E/K_3)$ is even. Then, in the same way as above, there are infinitely many  square norm twists $E^F$ of 2-Selmer rank one. The result follows if we assume the parity of the rank and 2-Selmer dimension are the same. 

\end{proof}

\section*{Acknowledgements}
Much of this project was completed as part of the author's graduate work; he thanks his advisor, Robert Lemke Oliver, for all of his guidance, suggestions, and support. He also thanks George McNinch, Sun Woo Park, Ari Shnidman, David Smyth, and Jiuya Wang for fruitful conversations and helpful comments on this paper. Finally, the author also extends his thanks to an anonymous referee from PJM for multiple careful readings; their comments greatly improved the paper.

\begin{bibdiv}
 \begin{biblist}*{labels={alphabetic}}

\bib{BD}{article}{
	title = {Brauer relations in finite groups},
	author = {Bartel, Alex},
	author = {Dokchitser, Tim},
	journal = {Journal of the European Mathematical Society},
	year = {2015},
	volume = {17},
	pages = {2473-2512}
}

%\bib{BhargQuartic}{article}{
%      title={The Density of Quartic Rings and Fields}, 
%      author={Bhargava, Manjul},
%      journal = {Ann. of Math.},
 %    year={2005},
 %     volume = {162},
%      pages = {1031-1063},
%}

\bib{BSW}{article}{
      title={Geometry-of-numbers methods over global fields I: Prehomogeneous vector spaces}, 
      author={Bhargava, Manjul},
      author = {Shankar, Arul},
      author = {Wang, Xiaoheng},
      year={2015},
      eprint={arXiv:1512.03035},
      url = {https://arxiv.org/abs/1512.03035}
}

\bib{Magma}{article}{
    title = {The {M}agma algebra system. {I}. {T}he user language},
    author = {Bosma, Wieb},
    author = {Cannon, John},
    author = {Playoust, Catherine},
    journal = {Journal of Symbolic Computation},
    volume = {24},
    year = {1997},
    pages = {235--265}
}

\bib{BJK}{article}{
	author = {Byeon, Dongho},
	author = {Jeong, Keunyong},
	author = {Kim, Nayoung},
	title = {Restriction of scalars and cubic twists of elliptic curves},
	journal ={J. Korean Math. Soc.},
	year = {2021},
	edition = {58},
	pages = {123-132}
}

\bib{CohenMorra}{article}{
      title={Counting cubic extensions with given quadratic resolvent}, 
      author={Cohen, Henri},
      author={Morra, Anna},
      journal={Journal of Algebra},
      volume = {325},
      year = {2011},
      pages = {461-478}
}

\bib{CohenThorneQuartic}{article}{
      title={Dirichlet series associated to quartic fields with given cubic resolvent}, 
      author={Cohen, Henri},
      author={Thorne, Frank},
      journal={Research in Number Theory},
      volume = {2:29},
      year = {2016}
}

\bib{CohenThorneCubic}{article}{
      title={Dirichlet Series Associated to Cubic Fields with Given Quadratic Resolvent}, 
      author={Cohen, Henri},
      author={Thorne, Frank},
      journal={Michigan Math J.},
      volume = {63},
      date = {2014},
      pages = {253-273}
}

\bib{David}{article}{
    title = {Vanishing of $L$-functions of elliptic curves over number fields},
    author = {David, Chantal},
    author = {Fearnley, Jack},
    author = {Kisilevsky, Hershy},
    journal = { Special Volume "Clay Mathematical Institute Special Week on Ranks of Elliptic Curves and Random Matrix Theory"},
    publisher = {Cambridge University Press},
    year = {2006},
    pages = {33-46}
}

\bib{DD}{article}{
	title = {On the Birch-Swinnerton-Dyer quotients modulo squares},
	author = {Dokchitser, Tim},
	author = {Dokchitser, Vladimir},
	journal = {Annals of Mathematics},
	volume = {172},
	year = {2015},
	pages = {567–596}	
}

\bib{Eisenbud}{book}{
    AUTHOR = {Eisenbud, David},
     TITLE = {Commutative Algebra},
    SERIES = {Graduate Texts in Mathematics},
    VOLUME = {150},
%      NOTE = {With a view toward algebraic geometry},
 PUBLISHER = {Springer-Verlag, New York},
      YEAR = {1995},
     PAGES = {xvi+785},
      ISBN = {0-387-94268-8; 0-387-94269-6},
%   MRCLASS = {13-01 (14A05)},
%  MRNUMBER = {1322960},
%MRREVIEWER = {Matthew\ Miller},
       DOI = {10.1007/978-1-4612-5350-1},
       URL = {https://doi.org/10.1007/978-1-4612-5350-1}
}

\bib{KMR}{article}{
    author = {Klagsbrun, Zev},
    author = {Mazur, Barry},
    author = {Rubin, Karl}, 
    title = {A Markov Model for Selmer Rank in Families of Twists},
    journal = {Compositio Math.},
    volume = {150},
    pages = {1077-1106},
    year = {2014}
}

\bib{LOT}{article}{
    author = {Lemke Oliver, Robert},
    author = {Thorne, Frank},
    title = {Rank growth of elliptic curves in nonabelian extensions},
    journal = {International Mathematics Research Notices},
    volume = {2021:24},
    year = {2021}
}

\bib{Malle}{article}{
    author = {Malle, Gunter},
    title = {On the distribution of Galois groups. II.},
    journal = {Experimental Mathematics},
    volume = {13(2)},
    pages = {129–135},
    year = {2004}
}

\bib{MR}{article}{
    author = {Mazur, Barry},
    author = {Rubin, Karl}, 
    title = {Ranks of Twists of Elliptic Curves and Hilbert's Tenth Problem},
    journal = {Inventiones Mathematicae},
    volume = {181(3)},
    pages = {541-575},
    year = {2010}
}

\bib{MilneCFT}{misc}{
	author = {Milne, J. S.},
	title = {Class Field Theory},
	note={Available at www.jmilne.org/math/},
	date = {2020}
}

\bib{Serre}{book}{
	author = {Serre, J.P.},
	title = {Linear Representions of Finite Groups},
	publisher = {Springer Books},
	date = {1977}
}

\bib{SerreGalCo}{book}{
	author = {Serre, J.P.},
	title = {Galois Cohomology},
	publisher = {Springer Books},
	date = {1997}
}

\bib{Ari}{article}{
    author = {Shnidman, Ari},
    author = {Weiss, Ariel},
    title = {Rank growth of elliptic curves over $N$-th root extensions},
    eprint = {arXiv:2112.12864},
    year = {2021}
}

\bib{Smith0}{article}{
	author = {Smith, Alexander},
	title = {$2^\infty$-Selmer Groups, $2^\infty$-Class Groups, and Goldfeld's Conjecture},
	eprint = {arXiv:1702.02325},
	date = {2017}
}

\bib{Smith1}{article}{
	author = {Smith, Alexander},
	title = {The distribution of fixed point Selmer groups in twist families},
	eprint = {arXiv:2207.05143},
	date = {2022}
}

\bib{Smith2}{article}{
	author = {Smith, Alexander},
	title = {The distribution of $\ell^\infty$-Selmer groups in degree $\ell$ twist families},
	eprint = {arXiv:2207.05674},
	date = {2022}
}

\bib{Wintner}{article}{
      title={On the Prime Number Theorems}, 
      author={Wintner, Aurel},
      journal={American Journal of Mathematics},
      volume = {65},
      pages = {320-326},
      year = {1942}
}

\end{biblist}
\end{bibdiv}

\end{document}